\newtheoremstyle{sctheorem} 
    {\topsep}                    
    {\topsep}                    
    {\itshape}                   
    {}                           
    {\scshape}                   
    {.}                          
    {.5em}                       
    {}  
\newtheoremstyle{scdefinition} 
    {\topsep}                    
    {\topsep}                    
    {}                           
    {}                           
    {\scshape}                   
    {.}                          
    {.5em}                       
    {}  
\theoremstyle{sctheorem}
\newtheorem{lemma}{Lemma}
\newtheorem{theorem}[lemma]{Theorem}
\newtheorem{corollary}[lemma]{Corollary}
\newtheorem{proposition}[lemma]{Proposition}
\newtheorem{conjecture}[lemma]{Conjecture}
\theoremstyle{scdefinition}
\newtheorem{definition}[lemma]{Definition}
\newtheorem{example}[lemma]{Example}
\numberwithin{lemma}{section}
\numberwithin{equation}{section}
\newcommand\C{{\mathbb{C}}}
\newcommand\R{{\mathbb{R}}}
\newcommand\Z{{\mathbb{Z}}}
\newcommand\supp{\mathop{\textup{supp}}}
\newcommand\Sym{\mathop{\textup{Sym}}}
\newcommand{\vr}{\hat{r}}
\newcommand{\vs}{\hat{s}}
\newcommand{\vt}{\hat{t}}
\newcommand{\vu}{\hat{u}}
\newcommand{\vv}{\hat{v}}
\newcommand{\vw}{\hat{w}}
\newcommand{\vx}{\hat{x}}
\newcommand{\vy}{\hat{y}}
\newcommand{\vz}{\hat{z}}
\newcommand\ind[1]{_{#1}}
\begin{document}

\title{\Large Fast matrix multiplication using coherent
configurations}
\author{Henry Cohn\thanks{Microsoft Research New England, One Memorial Drive,
Cambridge, MA 02142,
\texttt{cohn@microsoft.com}} \\
\and Christopher Umans\thanks{Computing + Mathematical Sciences,
California Institute of Technology, Pasadena, CA 91125,
\texttt{umans@cms.caltech.edu}; research supported by NSF grants
CCF-0846991 and CCF-1116111 and BSF grant 2010120.}}
\date{}

\maketitle

\pagenumbering{arabic}
\setcounter{page}{1074}

\begin{abstract}
We introduce a relaxation of the notion of tensor rank, called
\emph{$s$-rank}, and show that upper bounds on the $s$-rank of the
matrix multiplication tensor imply upper bounds on the ordinary rank.
In particular, if the ``$s$-rank exponent of matrix multiplication''
equals 2, then $\omega = 2$.  This connection between the $s$-rank
exponent and the ordinary exponent enables us to significantly
generalize the group-theoretic approach of Cohn and Umans, from group
algebras to general algebras. Embedding matrix multiplication into
general algebra multiplication yields bounds on $s$-rank (not ordinary
rank) and, prior to this paper, that had been a barrier to working with
general algebras.

We identify \emph{adjacency algebras} of \emph{coherent configurations}
as a promising family of algebras in the generalized framework.
Coherent configurations are combinatorial objects that generalize
groups and group actions; adjacency algebras are the analogue of group
algebras and retain many of their important features. As with groups,
coherent configurations support matrix multiplication when a natural
combinatorial condition is satisfied, involving triangles of points in
their underlying geometry.

Finally, we prove a closure property involving symmetric powers of
adjacency algebras, which enables us to prove nontrivial bounds on
$\omega$ using \emph{commutative} coherent configurations and suggests
that commutative coherent configurations may be sufficient to prove
$\omega = 2$. Altogether, our results show that bounds on $\omega$ can
be established by embedding large matrix multiplication instances into
small commutative coherent configurations.
\end{abstract}

\section{Introduction}

Determining the exponent of matrix multiplication is one of the most
fundamental unsolved problems in algebraic complexity. This quantity is
the smallest number $\omega$ such that $n \times n$ matrix
multiplication can be carried out using $n^{\omega+o(1)}$ arithmetic
operations as $n$ tends to infinity.  Clearly $\omega \ge 2$, and it is
widely believed that $\omega=2$, but the best upper bound known is
$\omega \le 2.3727$ (due to Vassilevska Williams \cite{W}). The
importance of $\omega$ is by no means limited to matrix multiplication,
as $\omega$ also describes the asymptotic complexity of many other
problems in linear algebra and graph theory (see Chapter~16 of
\cite{BCS}).

In the 43 years since Strassen's original paper \cite{S} gave the first
improvement on the obvious exponent bound $\omega \le 3$, there have
been several major conceptual advances in the effort to obtain upper
bounds on $\omega$, each of which can informally be understood as
relaxing the ``rules of the game.'' For example, Bini \cite{B} showed
that an upper bound on the \emph{border rank} of a tensor implies an
upper bound on its asymptotic rank. Indeed, there are useful examples
of tensors with border rank strictly smaller than their rank, which led
to improvements over Strassen's original algorithm. Sch\"onhage
\cite{Sch} showed how to convert upper bounds on the rank of the direct
sum of several matrix multiplication tensors into an upper bound on
$\omega$, and his \emph{asymptotic sum inequality} has played a crucial
role in nearly all further advances. Strassen's \emph{laser method}
\cite{S87} gave a way to convert non-matrix multiplication tensors
(whose coarse structure contains a large diagonal, and whose components
are all isomorphic to matrix multiplication tensors) into upper bounds
on $\omega$, and this method was used by Coppersmith and Winograd
\cite{CW} as well as in the recent improvements of Davie and Stothers
\cite{St,DS} and Vassilevska Williams \cite{W}.

Here we introduce a further relaxation of the rules of the game, by
studying a weighted version of matrix multiplication.  Instead of
computing the product $AB$ of two matrices via
\[
(AB)_{i,k} = \sum_j A_{i,j} B_{j,k},
\]
we use
\[
\sum_j \lambda_{i,j,k} A_{i,j} B_{j,k},
\]
where the coefficients $\lambda_{i,j,k}$ are nonzero complex numbers.
Of course, in certain cases weighted matrix multiplication is trivially
equivalent to ordinary matrix multiplication.  For example, if
$\lambda_{i,j,k}$ can be written as $\alpha_{i,j} \beta_{j,k}
\gamma_{k,i}$, then weighted matrix multiplication amounts to ordinary
multiplication of matrices whose entries have been rescaled.  However,
rescaling does not yield an efficient equivalence for arbitrary
weights.

We capture the complexity of weighted matrix multiplication via a new
exponent $\omega_s$, satisfying $2 \le \omega_s \le \omega$.  It is the
smallest real number for which there exist weights (depending on the
dimensions of the matrices) such that weighted $n \times n$ matrix
multiplication can be carried out in $n^{\omega_s+o(1)}$ arithmetic
operations.  The ``s'' stands for ``support,'' because we are dealing
with tensors that have the same support as the matrix multiplication
tensors.

In \cite{CU}, we showed how to embed matrix multiplication into group
algebra multiplication, and this methodology was used in \cite{CKSU} to
prove strong bounds on $\omega$.  Replacing group algebras with more
general algebras has always been an appealing generalization, and
indeed the same approach works, except that it yields an embedding of
\emph{weighted} matrix multiplication.  Thus, it gives an upper bound
on $\omega_s$, rather than $\omega$. Prior to this paper, an upper
bound on $\omega_s$ was of interest only by analogy with $\omega$, and
it was not known to imply anything about $\omega$ itself. Here, we
overcome this obstacle by bounding $\omega$ in terms of $\omega_s$, and
we develop this embedding approach for a promising class of algebras.

Our main results are:

\begin{enumerate}
\item[(1)] We prove that $\omega \le (3\omega_s-2)/2$.  In
    particular, if $\omega_s \le 2+\varepsilon$, then
    $\omega \le 2+ (3/2)\varepsilon$, so bounds for
    $\omega_s$ can be translated into bounds for $\omega$
    with just a 50\% penalty.  Of course, that penalty is
    significant when $\varepsilon$ is large, but our bound
    makes weighted matrix multiplication a viable approach
    for proving that $\omega=2$ (as then $\varepsilon=0$).
    This inequality between $\omega$ and $\omega_s$ can be
    proved using the laser method, but it does not seem to
    have been observed previously. We give a direct and
    self-contained proof in Section~\ref{section:bounds} as
    well as an explanation via the laser method. We also
    show that Boolean matrix multiplication has a
    randomized algebraic algorithm with running time
    $n^{\omega_s + o(1)}$, which avoids the 50\% penalty.

\item[(2)] We identify adjacency algebras of coherent configurations as
    a promising family of algebras.  Coherent configurations are
    combinatorial objects that generalize groups and group actions;
    adjacency algebras are the analogue of group algebras and
    retain many of their important features. In particular, each
    adjacency algebra possesses a basis corresponding to an
    underlying geometry, and weighted matrix multiplication can be
    embedded when the coherent configuration satisfies a
    combinatorial condition involving triangles of points.

\item[(3)] We prove a fundamental closure property of this class of
    algebras: any bound on $\omega_s$ obtained by applying the asymptotic sum
    inequality to independent embeddings of several weighted matrix
    multiplications can also be proved using a single embedding
    into a \emph{symmetric power} of the algebra.  Symmetric powers of
    adjacency algebras are themselves adjacency algebras, and this
    operation also preserves commutativity. Our results open the
    possibility of achieving $\omega=2$ using commutative adjacency
    algebras, and we conjecture that commutative adjacency algebras
    suffice.  In fact, that would follow from either of the two
    conjectures in \cite{CKSU}.
\end{enumerate}

A simple pigeonhole principle argument (Lemma~3.1 in \cite{CU}) shows
that one cannot nontrivially embed a single matrix multiplication problem into a
commutative group algebra.  One might expect a similar barrier for
commutative adjacency algebras, but the pigeonhole argument breaks down
in this setting.  Indeed, in this paper we prove nontrivial bounds on
$\omega$ using commutative adjacency algebras in
Theorem~\ref{thm:cksu-bounds-on-omega}, by applying our machinery to the
constructions from \cite{CKSU} (although we do not improve on the best
known bounds). We should note that the simultaneous triple product
property from \cite{CKSU} previously showed that one could avoid
noncommutativity at the cost of having to deal with several independent
embeddings.  One could return to the setting of a single embedding
using the wreath product construction (Theorem~7.1 in \cite{CKSU}), but
this reintroduced noncommutativity, whereas working with coherent
configurations rather than groups, as we do in this paper, avoids it completely.

The advantage of commutativity is that obtaining exponent bounds then
amounts to a familiar type of task: embed as large an object as possible
(here, a matrix multiplication instance) into as small an object as possible
(here, a coherent
configuration, with ``size'' measured by rank).  By contrast, the
noncommutative case involves a third quantity, namely the dimensions of
the irreducible representations of the algebra.

\section{Preliminaries and background}

We define $[n] = \{1,2,\dots,n\}$.

\subsection{Tensors}

Our results will all be stated in terms of tensors.  Recall that
tensors are a generalization of vectors and matrices to higher orders.
Tensor products of vector spaces form an elegant algebraic setting for
the theory of tensors, but we will adopt the more concrete approach of
representing tensors as multilinear forms. For example, the matrix with
entries $A_{i,j}$ corresponds to the bilinear form $\sum_{i,j} A_{i,j}
\vx_i \vy_j$, where $\vx_i$ and $\vy_j$ are formal variables, and we
can represent a third-order tensor as $\sum_{i,j,k} A_{i,j,k} \vx_i
\vy_j \vz_k$. We will use hats to make it clear which symbols denote
formal variables.  Applying invertible linear transformations to the
sets of variables (here, $\{\vx_i\}$, $\{\vy_j\}$, and $\{\vz_k\}$)
yields an isomorphic tensor, but we cannot mix variables from different
sets.

The \emph{direct sum} $T \oplus T'$ of two tensors is simply their sum,
if they have no variables in common (otherwise, first change variables
to remove any overlap). For the \emph{tensor product} $T \otimes T'$,
if $T = \sum_{i,j,k} T_{i,j,k} \vx_i \vy_j \vz_k$ and $T' =
\sum_{\ell,m,n} T'_{\ell,m,n} \vu_\ell \vv_m \vw_n$, then
\[
T \otimes T' = \sum_{i,j,k,\ell,m,n} T_{i,j,k} T'_{\ell,m,n} \vr_{i,\ell} \vs_{j,m} \vt_{k,n},
\]
with new variables $\vr_{i,\ell}$, $\vs_{j,m}$, and $\vt_{k,n}$.  In
other words, we simply take the product of $T$ and $T'$ but combine the
variables as illustrated above (e.g., $\vx_i\vu_\ell$ becomes
$\vr_{i,\ell}$). The direct sum and tensor product are defined only for
tensors of the same order, and they preserve that order.

The \emph{rank} $R(T)$ of a tensor $T$ is one of its most important
invariants. A nonzero tensor has rank $1$ if it is the product of
linear forms, and rank $r$ if it is the sum of $r$ rank $1$ tensors but
no fewer. In other words, $\sum_{i,j,k} T_{i,j,k} \vx_i \vy_j \vz_k$
has rank at most $r$ if there are linear forms $\alpha_\ell(\vx)$,
$\beta_\ell(\vy)$, and $\gamma_\ell(\vz)$ such that
\[
\sum_{i,j,k} T_{i,j,k} \vx_i \vy_j \vz_k = \sum_{\ell=1}^r \alpha_\ell(\vx) \beta_\ell(\vy) \gamma_\ell(\vz).
\]
Tensor rank generalizes the concept of matrix rank, but it is more
subtle. While matrices can be brought into a simple canonical form (row
echelon form) in which their rank is visible, tensors cannot, because
the symmetry group acting on them has far too low a dimension compared
with the dimension of the space of tensors itself. Indeed, computing
tensor rank is NP-hard \cite{H}.

\subsection{Matrix multiplication in terms of tensors}

The \emph{matrix multiplication tensor} $\langle \ell,m,n \rangle$ is
the tensor
\[
\sum_{i=1}^{\ell} \sum_{j=1}^m \sum_{k=1}^n \vx_{i,j} \vy_{j,k} \vz_{k,i}.
\]
Note that the coefficient of $\vz_{k,i}$ singles out the
$\vx_{i,j}\vy_{j,k}$ terms that occur in the $(i,k)$ entry of the
matrix product.  It is easy to check that $\langle \ell,m,n \rangle
\otimes \langle \ell',m',n' \rangle \cong \langle \ell\ell',mm',nn'
\rangle$, which amounts to the assertion that block matrix multiplication computes the matrix product.

A low-rank expression for $\langle \ell,m,n \rangle$ specifies an
efficient \emph{bilinear algorithm} for computing the product of $\ell
\times m$ and $m \times n$ matrices.  In particular, it follows that
$(\ell m n)^{\omega/3} \le R(\langle \ell,m,n \rangle)$
(Proposition~15.5 in \cite{BCS}).

In fact, although we have defined $\omega$ in terms of arbitrary
algebraic algorithms, it is completely characterized by rank via
\[
\omega = \inf\{\tau \in \R : R(\langle n,n,n \rangle) =
O(n^\tau)\}
\]
(Proposition~15.1 in \cite{BCS}). In other words, bilinear algorithms
have the same exponent as arbitrary algebraic algorithms.  Thus, the
entire subject of fast matrix multiplication can be reduced to bounding
the rank of matrix multiplication tensors.

Sch\"onhage's \emph{asymptotic sum inequality} \cite{Sch} states that
\begin{multline} \label{eqn:asi}
(\ell_1 m_1 n_1)^{\omega/3} + \dots + (\ell_k m_k n_k)^{\omega/3} \\
\le R\big(\langle \ell_1,m_1,n_1 \rangle \oplus \dots \oplus \langle
\ell_k,m_k,n_k\rangle\big),
\end{multline}
and furthermore that the same holds for border rank (a relaxation of
the notion of rank which will not play an important role in this
paper). Thus, an unexpectedly efficient method for carrying out several
independent matrix multiplications yields a bound on $\omega$.

See \cite{BCS} for further background on tensors, matrix
multiplication, and algebraic complexity in general.  It is important
to keep in mind that the tensor manipulations all have implicit
algorithms behind them.  In principle one could dispense with the
tensor formalism completely, but it plays a valuable role in focusing
attention on the central issues.

\section{Matrix multiplication exponent bounds via s-rank}
\label{section:bounds}

In this section, we show that an upper bound on what we call the
``support rank''---or \emph{s-rank}---of a matrix multiplication tensor
implies an upper bound on $\omega$.  The \emph{support} $\supp(T)$ of a
tensor $T$ is the set of monomials that have nonzero coefficients.  Of
course this depends on the choice of basis and is therefore not an
isomorphism invariant, and the same is true for concepts like s-rank
that are defined in terms of it. However, basis dependence is not a
difficulty in algebraic complexity. After all, any computational
problem must specify a choice of basis for use in input and output, and
writing a tensor as a multilinear form already involves an implicit
choice of basis (the choice of variables).

\begin{definition}
The \emph{s-rank} $R_s(T)$ of a tensor $T$ is the minimum rank of
a tensor $T'$ for which $\supp(T) = \supp(T')$.
\end{definition}

Clearly s-rank can be no larger than rank. Here is a simple example
that shows that s-rank can be dramatically smaller than both rank
and border rank:

\begin{proposition}
The $n \times n$ matrix $J - I$, where $J$ is the all ones matrix and
$I$ is the identity matrix, has rank $n$ and border rank $n$, but
s-rank equal to $2$.
\end{proposition}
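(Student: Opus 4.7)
The plan is to verify each of the three claims (rank, border rank, s-rank) separately; each reduces to a short linear-algebra observation, and the only conceptually new ingredient is the combinatorial description of the support of a rank-one bilinear form.

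For the rank claim, I would diagonalize. The all-ones matrix $J$ has eigenvalues $n$ (on the all-ones vector) and $0$ (with multiplicity $n-1$ on its orthogonal complement), so $J-I$ has eigenvalues $n-1$ and $-1$, all nonzero for $n \geq 2$. Hence $J-I$ is invertible and has rank $n$. For border rank, I would use that for order-$2$ tensors the rank is characterized by the vanishing of $(r+1)\times(r+1)$ minors, which is a Zariski-closed condition; hence border rank coincides with rank for matrices, and border rank is also $n$.

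For the upper bound $R_s(J-I) \leq 2$, I would exhibit a rank-$2$ matrix with the same support. The matrix $A$ defined by $A_{i,j} = i - j$ vanishes exactly on the diagonal and is nonzero elsewhere, and it decomposes as
\[
A = u\mathbf{1}^{T} - \mathbf{1}u^{T}, \qquad u = (1,2,\dots,n)^{T},
\]
so $A$ has rank at most $2$ while $\supp(A) = \supp(J-I)$.

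For the lower bound $R_s(J-I) \geq 2$, the key observation is that a rank-one bilinear form
\[
\Bigl(\sum_i a_i \vx_i\Bigr)\Bigl(\sum_j b_j \vy_j\Bigr)
\]
has coefficient $a_i b_j$ on $\vx_i\vy_j$, so its support is the combinatorial rectangle $\supp(a)\times\supp(b)$. The off-diagonal set $\{(i,j) : i\neq j\}$ is not a product set for $n \geq 2$ (its complement, the diagonal, contains $(1,1)$ and $(2,2)$ but not $(1,2)$ or $(2,1)$), so no rank-one matrix has this support. Hence $R_s(J-I) \geq 2$. No step is a real obstacle; the only mildly subtle point is framing the lower bound in terms of rectangles of supports, which is the prototype of the kind of support-based argument the paper will exploit later.
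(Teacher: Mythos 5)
Your proposal is correct, and the overall strategy (rank via invertibility, border rank via the Zariski-closed minor condition, s-rank upper bound via an explicit rank-$2$ matrix with the right support, lower bound via the product structure of rank-one supports) is the same as the paper's. The one genuine difference is the choice of rank-$2$ witness: the paper takes $M-J$ with $M_{i,j}=\zeta^{i-j}$ for $\zeta$ a primitive $n$-th root of unity, so that $M-J$ vanishes exactly on the diagonal and is a difference of two rank-one matrices; you take $A_{i,j}=i-j=u\mathbf{1}^{T}-\mathbf{1}u^{T}$ with $u=(1,\dots,n)^{T}$. Both are valid; yours is arithmetically more elementary (integer entries, no roots of unity), while the paper's construction has a discrete-Fourier flavor that fits the root-of-unity / group-algebra theme running through the rest of the paper. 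You are also more explicit on the lower bound, where the paper merely asserts ``it is easy to see that no matrix with the same support as $J-I$ has rank $1$''; your rectangle argument---the support of a rank-one matrix is $\supp(a)\times\supp(b)$, and the off-diagonal set contains $(1,2)$ and $(2,1)$ but not $(1,1)$, so it is not a rectangle---is exactly the right way to make that precise. (One small stylistic point: your parenthetical phrases the obstruction in terms of the complement diagonal set, which is a little roundabout; it is cleaner to argue directly that $(1,2),(2,1)\in\supp$ but $(1,1)\notin\supp$ rules out a rectangle.)
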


\begin{proof}
Rank and border rank coincide for matrices (the matrices of rank at
most $r$ are characterized by determinantal conditions and thus form a
closed set), and $J-I$ has rank $n$. However, consider the rank $1$
matrix $M$ defined by $M_{i,j} = \zeta^{i-j}$, where $\zeta$ is a
primitive $n$-th root of unity. Then $M - J$ has the same support as
$J-I$. Because $M$ and $J$ are both rank 1 matrices, the s-rank of
$J-I$ is at most $2$. It is also easy to see that no matrix with the
same support as $J-I$ has rank 1, so the s-rank is exactly 2.
\end{proof}

On the other hand, border rank can be smaller than s-rank, so
these two relaxations of rank are incomparable:
\begin{proposition}
\label{prop:border-rank-smaller-than-s-rank}
The tensor $T = \vx_0\vy_0\vz_0 + \vx_0\vy_1\vz_1 + \vx_1\vy_0\vz_1$ has border rank 2 and s-rank 3.
\end{proposition}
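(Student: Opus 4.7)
My plan is to handle the border rank and s-rank claims separately. For the border rank upper bound, I would exhibit an explicit two-term degeneration: set
\[
T_\varepsilon \;=\; \frac{1}{\varepsilon}\bigl((\vx_0 + \varepsilon \vx_1)(\vy_0 + \varepsilon \vy_1)(\vz_1 + \varepsilon \vz_0) - \vx_0 \vy_0 \vz_1\bigr),
\]
which is visibly a sum of two rank-$1$ tensors for each $\varepsilon \ne 0$. Expanding the triple product, the $\varepsilon^0$ term is $\vx_0\vy_0\vz_1$ and cancels, while the coefficient of $\varepsilon^1$ equals $T$, so $T_\varepsilon \to T$ as $\varepsilon \to 0$, establishing border rank at most $2$. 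For the matching lower bound, I would flatten $T$ into a $2 \times 4$ matrix by grouping the $\vy$ and $\vz$ indices, observe that its matrix rank equals $2$, conclude $R(T) \ge 2$, and upgrade this to border rank at least $2$ via the standard fact that the set of tensors of rank at most $1$ is Zariski-closed (it is cut out by the $2 \times 2$ minors of any flattening).

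For the s-rank, the upper bound $R_s(T) \le 3$ is immediate from the three-term expression defining $T$. For the lower bound, let $T' = a\vx_0\vy_0\vz_0 + b\vx_0\vy_1\vz_1 + c\vx_1\vy_0\vz_1$ be any tensor with the same support, so $a,b,c$ are all nonzero, and consider its two $\vz$-slices
\[
M_0 = \begin{pmatrix} a & 0 \\ 0 & 0 \end{pmatrix}, \qquad M_1 = \begin{pmatrix} 0 & b \\ c & 0 \end{pmatrix}.
\]
Suppose for contradiction that $R(T') = 2$, with decomposition $T' = u_1 \otimes v_1 \otimes w_1 + u_2 \otimes v_2 \otimes w_2$. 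Each slice $M_k$ then lies in the pencil $V = \mathrm{span}(u_1 v_1^T, u_2 v_2^T)$, and since $M_0, M_1$ are linearly independent, $V$ must coincide with $\mathrm{span}(M_0, M_1)$. A direct computation gives $\det(sM_0 + tM_1) = -bc\, t^2$, so the only nonzero matrices of rank at most $1$ in $V$ are scalar multiples of $M_0$. Hence $V$ cannot contain two linearly independent rank-$1$ matrices, contradicting the decomposition. Therefore $R(T') \ge 3$, and so $R_s(T) \ge 3$.

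The main obstacle is the s-rank lower bound, and the pencil-of-slices argument above is the cleanest route I see; a brute-force analysis of the eight trilinear equations in the coordinates of a putative rank-$2$ decomposition would also work, but is substantially messier and less illuminating than the geometric picture of a two-dimensional family of $2 \times 2$ matrices whose rank-$1$ locus is one-dimensional.
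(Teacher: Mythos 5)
Your proof is correct, but it takes a genuinely different route from the paper's. For the border rank, the paper simply cites Bl\"aser's notes, whereas you give a self-contained explicit degeneration plus a flattening argument for the lower bound; both work, and yours has the virtue of being self-contained. The more interesting divergence is in the s-rank lower bound. The paper uses the \emph{substitution method}: one rank-one summand must depend on $\vx_1$, so substitute a scalar multiple of $\vx_0$ for $\vx_1$ to kill it; the resulting tensor still depends on $\vy_1$, so a second summand can be killed by a similar substitution; what remains still depends on $\vz_0$, forcing a third summand. Your argument is instead a determinantal ``pencil of slices'' argument: the two $\vz$-slices span a two-dimensional space of $2\times 2$ matrices in which $\det(sM_0 + tM_1) = -bc\,t^2$, so the rank-$\le 1$ locus is only one line, and hence the pencil cannot be spanned by two rank-one matrices. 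Both proofs are valid; the substitution method is the more elementary and is the one the paper explicitly highlights as a general-purpose tool for s-rank lower bounds (noting later that it transfers directly from rank to s-rank), while your pencil argument is more geometric and arguably more illuminating for this particular small example, though it exploits the special circumstance that the slices are $2\times 2$ matrices and a determinant classifies the rank-one locus. One small presentational point: in the border rank lower bound, the cleanest phrasing is that the flattening rank is lower-semicontinuous (the set of matrices of rank $\le 1$ is closed), so the flattening rank bounds border rank from below; your phrasing gets there but conflates the Segre variety and the flattening minors in a way that could be tightened.
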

\begin{proof}
We refer to Bl\"{a}ser's notes \cite[p. 31]{Blaser} for the
simple proof that the border rank is 2. To show that the s-rank
is at least 3, we mimic his proof via the substitution method
that the (ordinary) rank is 3. In a decomposition of any tensor
$T'$ with the same support as $T$ into the sum of rank one
tensors, one of the rank one tensors must depend on $\vx_1$. We
can make this tensor zero by substituting a scalar multiple of
$\vx_0$ for $\vx_1$. After this substitution, $T'$ still
depends on $\vy_1$, so there is another rank one tensor in the
decomposition that depends on $\vy_1$. We can make this tensor
zero by substituting a scalar multiple of $\vy_0$ for $\vy_1$.
After both substitutions, $T'$ still depends on $\vz_0$,
so there must be at least one more rank one tensor in
the decomposition. The corresponding s-rank upper bound of 3 is
trivial.
\end{proof}

Like ordinary rank, s-rank is subadditive and submultiplicative in the
sense that for tensors $T$ and $T'$, we have $R_s(T \oplus T') \le
R_s(T) + R_s(T')$ and $R_s(T \otimes T') \le R_s(T)R_s(T')$. For matrix
multiplication tensors, we have $R_s(\langle \ell,m,n \rangle) =
R_s(\langle \ell',m',n' \rangle)$ for every permutation $(\ell',m',n')$
of $(\ell,m,n)$. In analogy to the exponent of matrix multiplication
$\omega$, we define $\omega_s$, the s-rank exponent of matrix
multiplication, as follows:

\begin{definition}
The \emph{s-rank exponent of matrix multiplication}, denoted
$\omega_s$, is defined by
\[\omega_s = \inf\{\tau \in \R : R_s(\langle n,n,n \rangle) =
O(n^\tau)\}.\]
\end{definition}

By comparison, the exponent $\omega$ can be defined in the same way,
with ordinary rank replacing s-rank in the above expression. Since
every tensor having the same support as $\langle n,n,n \rangle$ has
$n^2$ linearly independent slices, $R_s(\langle n,n,n \rangle) \ge
n^2$, and thus $2 \le \omega_s \le \omega$.

As one would expect, an $s$-rank upper bound implies an upper bound on
$\omega_s$.  Here is the $s$-rank version of the standard proof:

\begin{proposition}
\label{prop:s-rank-omega-bound} For all $\ell,m,n$, we have
\[
(\ell m
n)^{\omega_s/3} \le R_s(\langle \ell,m,n \rangle).
\]
\end{proposition}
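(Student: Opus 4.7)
The plan is to run the s-rank analogue of the standard proof of the rank lower bound $(\ell m n)^{\omega/3} \le R(\langle \ell,m,n \rangle)$, using the permutation-invariance $R_s(\langle \ell,m,n \rangle) = R_s(\langle m,n,\ell \rangle) = R_s(\langle n,\ell,m \rangle)$ and the submultiplicativity $R_s(T \otimes T') \le R_s(T) R_s(T')$ noted above. The key structural observation is that the familiar cyclic isomorphism
\[
\langle \ell,m,n \rangle \otimes \langle m,n,\ell \rangle \otimes \langle n,\ell,m \rangle \cong \langle \ell m n, \ell m n, \ell m n \rangle
\]
is implemented by a pure regrouping of index tuples on each of the three variable sets separately, with no linear mixing of variables. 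Consequently it identifies the supports of the two sides, so their s-ranks agree. Combining this with the two facts above and writing $L = \ell m n$ and $r = R_s(\langle \ell,m,n \rangle)$, I get $R_s(\langle L,L,L \rangle) \le r^3$, and iterating on $N$-fold tensor powers yields $R_s(\langle L^N, L^N, L^N \rangle) \le r^{3N}$ for every $N \ge 1$.

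To convert this into an asymptotic bound, I would establish a padding/monotonicity fact: if $M \le L^N$, then $R_s(\langle M,M,M \rangle) \le R_s(\langle L^N, L^N, L^N \rangle)$. Indeed, given any witness $T'$ with the same support as $\langle L^N, L^N, L^N \rangle$, setting every variable $\vx_{i,j}, \vy_{j,k}, \vz_{k,i}$ with an index outside $[M]$ to zero produces a tensor of rank no larger than $R(T')$ whose support is exactly that of $\langle M, M, M \rangle$. Now suppose, for contradiction, that $r < L^{\omega_s/3}$, and choose $\tau < \omega_s$ with $r^3 \le L^\tau$. For arbitrary $M$, pick $N$ minimal with $L^N \ge M$, so $L^N \le L M$; then
\[
R_s(\langle M,M,M \rangle) \le r^{3N} \le L^{\tau N} \le L^\tau M^\tau = O(M^\tau),
\]
contradicting the definition of $\omega_s$. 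Hence $r \ge L^{\omega_s/3}$, which is the claim.

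The only step needing a moment of care is the monotonicity, since the analogous padding is trivial for ordinary rank but requires one to verify in the s-rank setting that zeroing variables in a same-support witness for $\langle L^N,L^N,L^N \rangle$ really produces a same-support witness for $\langle M,M,M \rangle$. This is immediate from the form of the support: the restricted tensor has a nonzero $\vx_{i,j}\vy_{j,k}\vz_{k,i}$-coefficient precisely when $i,j,k \in [M]$. Everything else is routine bookkeeping.
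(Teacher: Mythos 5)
Your proposal is correct and follows essentially the same route as the paper's proof: symmetrize via the cyclic tensor product and permutation invariance of s-rank to get $R_s(\langle \ell m n, \ell m n, \ell m n\rangle) \le r^3$, then pad and take powers to bound $R_s(\langle M,M,M \rangle)$ for all $M$, and finally invoke the definition of $\omega_s$. The only differences are cosmetic: you phrase the conclusion as a contradiction rather than directly reading off $\omega_s \le 3\log_{\ell m n} r$, and you spell out the padding/monotonicity step (restricting a same-support witness to a sub-block) that the paper leaves implicit in the phrase ``by padding.''
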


\begin{proof}
Let $r = R_s(\langle \ell,m,n \rangle)$ and $M = \ell m n$. By
symmetrizing, we have $R_s(\langle M,M,M \rangle)\le r^3$, and then for
all $N \ge 1$, by padding to the next largest power $M^i$ of $M$,
\begin{align*}
R_s(\langle N,N,N \rangle) &\le R_s(\langle M^i,M^i,M^i \rangle)\\
&\le r^{3i}\\
&= \big(M^i\big)^{3 \log_M r}\\
&= O\big(N^{3\log_M r}\big).
\end{align*}
Thus, $\omega_s \le 3\log_M r$, from which the theorem follows.
\end{proof}

We note that one can define the border s-rank of $T$ to be the minimum
border rank of tensors with the same support as $T$, and then by using
Bini's argument \cite{B}, the above proposition holds with border
s-rank in place of s-rank.

Whereas an upper bound on the rank of a matrix multiplication tensor
implies a bilinear algorithm for matrix multiplication, an upper bound
on the s-rank implies a bilinear algorithm for a \emph{weighted}
version of matrix multiplication: given matrices $A$ and $B$, the
algorithm computes values
\[
C_{i,k} = \sum_j \lambda_{i,j,k}A_{i,j}B_{j,k},
\]
where the weights $\lambda_{i,j,k}$ are certain nonzero scalars
(depending on the construction used to attain a low s-rank). In other
words, each entry of the result matrix $C$ is a weighted inner product,
with different weightings for the different inner products.  There
seems to be no obvious transformation to remove these weights.

As noted above, $2 \le \omega_s \le \omega$, so upper bounds on
$\omega$ imply upper bounds on $\omega_s$ (and $\omega = 2$ implies
$\omega_s = 2$). Theorem~\ref{thm:remove-weights} below shows that upper
bounds on $\omega_s$ imply upper bounds on $\omega$, and indeed
$\omega_s = 2$ implies $\omega = 2$. Thus s-rank is a useful relaxation
of rank, when trying to bound the exponent of matrix multiplication.

\begin{theorem}
\label{thm:remove-weights} The exponents $\omega$ and
$\omega_s$ satisfy
\[
\omega \le (3\omega_s - 2)/2.
\]
\end{theorem}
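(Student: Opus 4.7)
The plan is to combine a rank-$r$ s-rank decomposition with Sch\"onhage's asymptotic sum inequality via a tensor cube construction in which three copies of the $\lambda$-weights cancel against one another.

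I would start with a rank-$r$ decomposition $T' = \sum_{i,j,k}\lambda_{i,j,k}\vx_{i,j}\vy_{j,k}\vz_{k,i}$, where $r = R_s(\langle n,n,n\rangle)$ and all $\lambda_{i,j,k} \ne 0$. The tensor cube $T' \otimes T' \otimes T'$ then has rank at most $r^3$ and is a weighted version of $\langle n^3, n^3, n^3\rangle$ whose coefficients are triple products $\lambda_{i_1, j_1, k_1}\lambda_{i_2, j_2, k_2}\lambda_{i_3, j_3, k_3}$.

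The central step is to exhibit inside $T'^{\otimes 3}$, via a suitable linear change of variables, the direct sum $\bigoplus_{p=1}^{n^2}\langle n^2, n^2, n^2\rangle$ of $n^2$ ordinary matrix multiplications. The aim is that within each summand the three $\lambda$-factors in the weights combine to form a coboundary $\alpha_p(i,j)\beta_p(j,k)\gamma_p(k,i)$, which can be absorbed by rescaling the summand's variables, leaving an unweighted matrix multiplication tensor. Since restriction cannot increase rank, this would yield
\[
R\Big(\bigoplus_{p=1}^{n^2}\langle n^2, n^2, n^2\rangle\Big) \le r^3.
\]
Sch\"onhage's asymptotic sum inequality then gives $n^2 \cdot (n^2 \cdot n^2 \cdot n^2)^{\omega/3} \le r^3$, i.e., $n^{2\omega+2} \le n^{3\omega_s + o(1)}$, and taking logarithms yields $\omega \le (3\omega_s - 2)/2$.

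The main obstacle is constructing the change of variables in the central step. For generic weights, the naive diagonal embedding---reading a summand parameter off the ``third coordinates'' of the three tensor factors---produces only $n$ consistent summands, because requiring the three copies' third coordinates to agree is too restrictive. Achieving the full $n^2$ summands with an ordinary matrix multiplication structure on each requires a more subtle substitution, for instance using character sums over a group action on the third coordinates, that pairs indices across the three copies so that the triple product of $\lambda$-weights becomes a coboundary on each summand; everything else in the argument is standard tensor manipulation together with Sch\"onhage's inequality.
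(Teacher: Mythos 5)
Your plan---cube the weighted tensor, carve out on the order of $n^2$ independent copies of $\langle n^2,n^2,n^2\rangle$ so that the triple product of $\lambda$-weights is a coboundary on each summand (hence removable by rescaling), then apply Sch\"onhage---is precisely the strategy the paper follows, and the final arithmetic is correct. However, the step you yourself flag as the main obstacle is the whole content of the proof, and the fix you gesture at (character sums over a group action) is not the right tool: the weights $\lambda_{i,j,k}$ are arbitrary nonzero scalars with no multiplicative structure, so a character-sum decomposition has nothing to latch onto.

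What actually does the work is a \emph{triangle-free set}: a set $S \subseteq \{(s_1,s_2,s_3) \in [n]^3 : s_1+s_2+s_3=n+2\}$ of size $n^{2-o(1)}$ with the property that if $s,t,u \in S$ satisfy $s_1=t_1$, $t_2=u_2$, and $u_3=s_3$, then $s=t=u$ (a Salem--Spencer/Behrend-type construction from \cite{CKSU}). The summand label $s$ is encoded by distributing its three coordinates across the three copies of $T$: the first copy carries $s_1$, the second $s_2$, the third $s_3$. The support constraint of $T^{\otimes 3}$ (which equals that of $\langle n^3,n^3,n^3\rangle$) then forces exactly the cyclic pattern $s_1=t_1$, $t_2=u_2$, $u_3=s_3$ on the three summand labels, and the triangle-free condition upgrades that to full agreement. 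With this encoding, cyclically distributing the $i,j,k$ indices across the three copies makes the coefficient of each surviving monomial a product of three $\lambda$-factors that splits as $\alpha(i,j)\,\beta(j,k)\,\gamma(k,i)$, which is absorbed by inserting compensating $\lambda$-factors into the definitions of the new variables. Note also that exactly $n^2$ summands is not attainable (known upper bounds on such sets forbid it); $n^{2-o(1)}$ is the correct count, and it suffices for the bound you want.
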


In other words, $\omega_s \le 2 + \varepsilon$ implies $\omega \le 2 +
(3/2)\varepsilon$.

\begin{proof}
By the definition of $\omega_s$, we have $R_s(\langle n, n, n \rangle)
= n^{\omega_s+o(1)}$.  For a given value of $n$, let $T$ be the
trilinear form corresponding to this (weighted) $n \times n$ matrix
multiplication:
\[
T = \sum_{a,b,c \in [n]} \lambda_{a,b,c}\vx\ind{a,b}\vy\ind{b,c}\vz\ind{c,a}
\]
with $0 \ne \lambda_{a,b,c}\in \C$ for all $a,b,c$. Let
\[S
\subseteq
\Delta_n = \{(s_1,s_2,s_3) : s_1,s_2,s_3 \in [n] \mbox{ and } s_1+ s_2
+ s_3 = n+2\}
\]
be a \emph{triangle-free set}, as defined in Section~6.2 of
\cite{CKSU}. Such a set has the property that if $s,t,u \in S$ satisfy
$s_1 = t_1$, $t_2 = u_2$, and $u_3 = s_3$ then $s = t = u$. In
\cite{CKSU} we gave a simple construction of triangle-free sets $S$
with $|S| = n^{2 - o(1)}$. Let $T'$ be the trilinear form corresponding
to $|S|$ independent $n^2 \times n^2$ matrix multiplications; i.e.,
\[
T' = \sum_{s \in S, \ i,j,k \in [n]^2} \vu\ind{s, i,j}\vv\ind{s, j,k}\vw\ind{s, k,i}.
\]

We will show that $T'$ is a restriction of the tensor power $T^{\otimes
3}$, which is given by
\[
T^{\otimes 3} = \sum_{a,b,c \in [n]^3} \lambda_{a_1,b_1,c_1}
\lambda_{a_2,b_2,c_2} \lambda_{a_3,b_3,c_3} \vx_{a,b} \vy_{b,c} \vz_{c,a}.
\]
In other words, we will show that $T'$ can be obtained by substituting variables
in $T^{\otimes 3}$, which implies that $R(T') \le R\big(T^{\otimes 3}\big)$.
To do so, define (for $s,t,u \in S$ and $i, i', j, j', k, k' \in [n]^2$)
\begin{align*}
\vu\ind{s,i,j'} &=  \lambda_{i_2, j_1', s_2} \vx\ind{(i_1, i_2, s_3), (s_1, j_1', j_2')}  \\
\vv\ind{t,j,k'} &=  \lambda_{t_3, j_2, k_2'} \vy\ind{(t_1, j_1, j_2), (k_1', t_2, k_2')}  \\
\vw\ind{u,k,i'} &=  \lambda_{i_1', u_1, k_1} \vz\ind{(k_1, u_2, k_2), (i_i', i_2', u_3)}
\end{align*}
and set the $\vx,\vy,\vz$ variables not mentioned in these equations
equal to zero. Under this change of variables, we will see that
$T^{\otimes 3}$ becomes exactly the tensor $T'$.  To check this, we
must verify that upon substituting the $\vu, \vv, \vw$ variables for
the $\vx, \vy, \vz$ variables in $T^{\otimes 3}$ according to the above
formulas, the coefficient of
$\vu\ind{s,i,j'}\vv\ind{t,j,k'}\vw\ind{u,k,i'}$ is $1$ if $s=t=u$,
$i=i'$, $j=j'$, and $k=k'$, and it is $0$ otherwise. Since the support
of $T^{\otimes 3}$ is the same as the support of $\langle n^3, n^3, n^3
\rangle$, the monomial
\[
\vx\ind{(i_1, i_2, s_3), (s_1, j_1', j_2')}\vy\ind{(t_1, j_1, j_2),
(k_1', t_2, k_2')}\vz\ind{(k_1, u_2, k_2), (i_i', i_2', u_3)}
\] in $T^{\otimes 3}$
has a nonzero coefficient if and only if
\begin{align*}
(s_1, j_1', j_2') &=  (t_1, j_1, j_2)\\
(k_1', t_2, k_2') &=  (k_1, u_2, k_2)\\
(i_i', i_2', u_3) &=  (i_1, i_2, s_3).
\end{align*}
This happens if and only if $i=i'$, $j=j'$, $k=k'$, $s_1 = t_1$, $t_2 =
u_2$, and $u_3 = s_3$, and by the definition of a triangle-free set the
last three conditions imply $s=t=u$.  The coefficient in
$T^{\otimes 3}$ of
\[
\vx\ind{(i_1,i_2,s_3),(s_1,j_1,j_2)}\vy\ind{(s_1,j_1,j_2),(k_1,s_2,k_2)}\vz\ind{(k_1,s_2,k_2),(i_1,i_2,s_3)}
\]
is $\lambda_{i_1,s_1,k_1} \lambda_{i_2,j_1,s_2} \lambda_{s_3,j_2,k_2}$,
which exactly corresponds to the $\lambda_{i_2, j_1', s_2}\lambda_{t_3,
j_2, k_2'}\lambda_{i_1', u_1, k_1}$ factor from the definition of
$\vu,\vv,\vw$, so the coefficient of
$\vu\ind{s,i,j}\vv\ind{s,j,k}\vw\ind{s,k,i}$ after the substitution is $1$.

Thus the ordinary rank of the direct sum of $|S| = n^{2 - o(1)}$
independent $n^2 \times n^2$ matrix multiplications is at most
$\big(n^{\omega_s + o(1)}\big)^3$, and applying the asymptotic sum
inequality \eqref{eqn:asi}, we get
\[n^{2 - o(1)}n^{2 \omega} \le (n^{\omega_s + o(1)})^3.\]
Taking logarithms and letting $n$ go to infinity, we get $2 + 2 \omega
\le 3\omega_s$, as desired.
\end{proof}

Theorem~\ref{thm:remove-weights} can also be proved using the laser
method, in particular using Proposition~7.3 from \cite{S87}
(Proposition 15.32 in \cite{BCS}), although this consequences does not
seem to have been observed in the literature. Here is a sketch of the
proof. By definition, there exist tensors having the same support
as $\langle n,n,n \rangle$, with rank $n^{\omega_s + o(1)}$.
Adopting the language of \cite{BCS}, such a tensor has a direct sum
decomposition $D$ whose $D$-support is isomorphic to $\langle
1,n,1\rangle$ and whose $D$-components are each \emph{isomorphic} to
$\langle n,1,n \rangle$ (it is a special feature of an outer-product
tensor, which has only a single 1 in each slice, that all tensors with
the same support are isomorphic). Proposition~15.32 in \cite{BCS} then
implies that $n^2n^{2\omega} \le \big(n^{\omega_s + o(1)}\big)^3$,
which yields the same bound as Theorem~\ref{thm:remove-weights}.

It is an interesting open problem to improve the conclusion of Theorem
\ref{thm:remove-weights} to $\omega \le \omega_s$. In the preceding
paragraph, the $D$-component isomorphism being used is very special (it
corresponds to a \emph{diagonal} change of basis), so one might hope to
avoid the loss coming from machinery that handles arbitrary
isomorphisms.

In fact, this loss \emph{can} be avoided entirely when one is
interested in the simpler problem of Boolean matrix
multiplication, i.e., matrix multiplication over the Boolean
semiring with ``and'' as multiplication and ``or'' as addition.
The next theorem describes how to use weighted multiplication
directly to obtain an algebraic algorithm for Boolean matrix
multiplication.

Given $n \times n$ Boolean matrices $A$ and $B$, let $A'$ be the obvious
lift of $A$ to a $0/1$ complex matrix, and define $B'$ the same way but
with a random choice of $1$ or $2$ for each nonzero entry.

\begin{theorem}
There is an algebraic
algorithm running in $n^{\omega_s + o(1)}$ operations that
computes from $A', B'$ an $n \times n$ matrix $C$ with the following
property: the $(i,j)$ entry of $C$ is $0$ if the $(i,j)$ entry of
the Boolean matrix product of $A$ and $B$ is $0$; otherwise it is
nonzero with probability at least $1/2$.
\end{theorem}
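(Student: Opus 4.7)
The plan is to run the algebraic algorithm coming from an $s$-rank decomposition of $\langle n,n,n\rangle$ directly on the complex lifts $A', B'$, and then argue by the principle of deferred decisions that the randomization of the nonzero entries of $B'$ suffices to prevent cancellation in every output that should be nonzero. By the definition of $\omega_s$ and the standard tensor-rank-to-algorithm translation (via padding and self-reducibility as in the proof of Proposition~\ref{prop:s-rank-omega-bound}), there is a bilinear algorithm that, on input complex matrices $X, Y$, computes all the weighted products
\[
(XY)^{\lambda}_{i,k} \;=\; \sum_{j \in [n]} \lambda_{i,j,k}\, X_{i,j}\, Y_{j,k}
\]
in $n^{\omega_s + o(1)}$ arithmetic operations, for some fixed nonzero weights $\lambda_{i,j,k} \in \C$ attached to the decomposition. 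Run this algorithm on $X = A'$ and $Y = B'$ and let $C_{i,k}$ denote the output.

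For the analysis, fix an entry $(i,k)$. Because $A'_{i,j} \in \{0,1\}$ and $B'_{j,k} \in \{0,1,2\}$, the summand $\lambda_{i,j,k}\,A'_{i,j}\,B'_{j,k}$ vanishes unless $j$ is a Boolean witness, i.e., unless $A_{i,j}=B_{j,k}=1$. Let $J_{i,k} = \{j : A_{i,j} = B_{j,k} = 1\}$. If the Boolean product has $(AB)_{i,k} = 0$ then $J_{i,k} = \emptyset$ and $C_{i,k} = 0$ deterministically, which gives the first half of the theorem. Otherwise choose any $j_0 \in J_{i,k}$ and condition on the values of $\{B'_{j,k} : j \in J_{i,k}\setminus\{j_0\}\}$; then $C_{i,k} = \lambda_{i,j_0,k}\,B'_{j_0,k} + Z$ for some fixed $Z \in \C$, and the two equally likely values $B'_{j_0,k} \in \{1,2\}$ produce two possible outputs that differ by the nonzero quantity $\lambda_{i,j_0,k}$. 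At most one of them can be zero, so $\Pr[C_{i,k} \ne 0] \ge 1/2$.

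The argument is short, and there is no real obstacle; the only thing to notice is \emph{why} the randomization is needed. One might hope simply to run the algorithm on the deterministic $\{0,1\}$-lift of $B$, but because the weights $\lambda_{i,j,k}$ are arbitrary nonzero complex numbers they could conspire with the particular pattern of $1$'s in $J_{i,k}$ to cancel $C_{i,k}$ exactly. Replacing each nonzero entry of $B$ by a uniformly random element of $\{1,2\}$ destroys any such fixed algebraic cancellation via the deferred-decisions trick, while keeping the lift compatible with the Boolean semiring (a zero stays a zero). This is why Boolean matrix multiplication escapes the $(3/2)$-penalty of Theorem~\ref{thm:remove-weights}: we only care whether $C_{i,k}$ is zero or nonzero, a question that is insensitive to the specific values of the weights on its support.
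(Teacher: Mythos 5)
Your proof is correct and is essentially the same as the paper's: both run the weighted bilinear algorithm from the $s$-rank decomposition on $A',B'$ and observe that conditioning on all but one random entry in the witness set leaves a binary choice for which at most one value yields zero. The paper phrases the final step as ``for a given $\ell \in L$, there is a unique value of $r_\ell$ making this sum zero,'' which is exactly your deferred-decisions conditioning argument.
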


The procedure may be repeated $O(\log n)$ times to obtain all
entries of the Boolean product of $A$ and $B$ with high
probability.

\begin{proof}
Using a bilinear algorithm, one can compute a suitably weighted
product of $A'$ and $B'$ in $n^{\omega_s + o(1)}$ operations.
We get a result matrix whose $(i,j)$
entry is
\[\sum_{\ell} \lambda_{i,\ell,j} A_{i,\ell}'B_{\ell, j}',\]
where $\lambda_{i,\ell,j} \ne 0$.
When the $(i,j)$ entry of the Boolean matrix product of $A$ and
$B$ is zero, this value is clearly also zero; otherwise, it equals
$\sum_{\ell \in L}\lambda_{i,\ell, j}r_\ell$ for a nonempty set
$L$ and each $r_\ell$ chosen randomly from $\{1,2\}$. For a
given $\ell \in L$, there is a unique value of $r_\ell$ making
this sum zero, so the probability that it vanishes is at most
$1/2$.
\end{proof}

If the weights arising in the above proof are all
positive (as they are for all the $s$-rank bounds we derive in
this paper), then no randomness is needed, as $A'$ and $B'$ can
both be taken to be the obvious lifts of $A$ and $B$ to $0/1$
matrices.

Finally, we note that all of the manipulations used in the matrix
multiplication literature for converting bounds on the rank of certain
``basic'' tensors into bounds on the rank of the matrix multiplication
tensor also work with s-rank in place of rank. So, for example, s-rank
bounds on the partial matrix multiplication tensor of Bini, Capovani,
Lotti, and Romani \cite{BCLR}, the basic tensor used by Sch\"onhage
\cite{Sch}, the basic tensor in Strassen's laser method paper
\cite{S87}, or any of the basic tensors introduced by Coppersmith and
Winograd \cite{CW} eventually yield an s-rank bound on a matrix
multiplication tensor by simply following the known proofs. However,
for most of these basic tensors with explicit tensor decompositions,
there are matching lower bounds on the rank via the \emph{substitution
method} (e.g., the proof of Proposition
\ref{prop:border-rank-smaller-than-s-rank}). Substitution method lower
bounds also prove lower bounds on s-rank, so there does not seem to be
an opportunity for an easy improvement from switching to s-rank.
However, an improvement by switching to border s-rank might be
possible. As a concrete example, we do not know whether the border
s-rank, or even just the s-rank, of $\langle 2,2,2 \rangle$ is $6$ or
$7$.

\section{Matrix multiplication via coherent configurations}
\label{section:coherent}

In this section, we describe how to embed matrix multiplication into
algebra multiplication.  To do so, it is helpful to have a basis with
considerable combinatorial structure.  As mentioned in the
introduction, adjacency algebras of coherent configurations are a
promising family of algebras to use here.  In
Subsections~\ref{subsec:coherent} and~\ref{subsec:adjacency}, we review
the basic theory of coherent configurations and their adjacency
algebras, and in Subsection~\ref{subsec:embedadj} we specialize our
general theory to this setting.

\subsection{Realizing matrix multiplication in algebras}

We can bound the s-rank of matrix multiplication by restricting
the structural tensor of an algebra.  Let $A$ be a
finite-dimensional complex algebra, and let $u_1,\dots,u_r$ be
a basis for $A$.  Then there are coefficients $\lambda_{i,j,k}$
such that
\[
u_i u_j = \sum_{k=1}^r \lambda_{i,j,k} u_k;
\]
they are called the \emph{structure constants} of $A$ with respect to
this basis.  The \emph{structural tensor} is the trilinear form
\[
\sum_{i,j,k} \lambda_{i,j,k} \vx_i \vy_j \vz_k.
\]
It is isomorphic to the multiplication tensor (i.e., the
element of $A^* \otimes A^* \otimes A$ corresponding to the
multiplication map from $A \otimes A$ to $A$).  More generally,
if we use any three bases $u_1,\dots,u_r$, $v_1,\dots,v_r$, and
$w_1,\dots,w_r$ for $A$ and define the coefficients by
\[
u_i v_j = \sum_{k=1}^r \lambda_{i,j,k} w_k,
\]
then the corresponding tensor is isomorphic to the structural
tensor.

\begin{definition} \label{definition:realize}
Let $A$ be an $r$-dimensional complex algebra with structure constants
$\lambda_{i,j,k}$ corresponding to some choice of bases.  We say $A$
\emph{realizes} $\langle \ell,m,n \rangle$ if there exist three
injective functions
\[
\alpha\colon[\ell]\times[m] \to [r], \quad
\beta\colon[m]\times[n] \to [r], \quad \gamma\colon[n]\times[\ell]
\to [r]
\]
such that
\[
\lambda_{\alpha(a,b'), \beta(b,c'), \gamma(c,a')} \ne 0
\]
if and only if $a = a'$, $b=b'$, and $c = c'$.
\end{definition}

Note that this definition depends on the choice of basis.  We
will typically suppress the choice of basis in the notation,
because the algebras we deal with later in the paper will
always come with a standard basis.

One might reasonably use the term ``s-realize'' instead of ``realize''
in Definition~\ref{definition:realize}.  We have chosen to use the simpler
term, rather than reserving it for strict realization involving only
structure constants that are $0$ or $1$, because we know of few
interesting examples of strict realization beyond group algebras
(where the notions coincide).

\begin{proposition}
If an algebra $A$ realizes $\langle \ell,m,n \rangle$, then the s-rank
of $\langle \ell,m,n \rangle$ is at most the rank of the structural
tensor for $A$.
\end{proposition}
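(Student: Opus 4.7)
The plan is to view the statement as an instance of restriction: we will obtain a tensor with exactly the same support as $\langle \ell,m,n\rangle$ by substituting into the structural tensor of $A$, and use the fact that substitution (variable restriction) cannot increase rank.

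Concretely, I would start with the structural tensor $T_A = \sum_{i,j,k} \lambda_{i,j,k}\vx_i\vy_j\vz_k$ and perform the substitution dictated by $\alpha$, $\beta$, $\gamma$: set $\vx_i = 0$ whenever $i$ is not in the image of $\alpha$, and otherwise rename $\vx_{\alpha(a,b)}$ as a new variable $\vx_{a,b}'$; do the analogous thing for $\vy$ with $\beta$ and for $\vz$ with $\gamma$. Since $\alpha,\beta,\gamma$ are injective, these renamings are well-defined. The tensor that results is
\[
T' = \sum_{\substack{a,a'\in[\ell]\\ b,b'\in[m]\\ c,c'\in[n]}} \lambda_{\alpha(a,b'),\beta(b,c'),\gamma(c,a')}\, \vx_{a,b'}'\vy_{b,c'}'\vz_{c,a'}'.
\]

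The next step is to read off the support of $T'$ using the realization hypothesis. By Definition~\ref{definition:realize}, the coefficient $\lambda_{\alpha(a,b'),\beta(b,c'),\gamma(c,a')}$ is nonzero exactly when $a=a'$, $b=b'$, $c=c'$; thus after collapsing indices,
\[
T' = \sum_{a\in[\ell],\,b\in[m],\,c\in[n]} \mu_{a,b,c}\, \vx_{a,b}'\vy_{b,c}'\vz_{c,a}',
\]
with every $\mu_{a,b,c}\ne 0$. This is precisely a tensor having the same support as $\langle \ell,m,n\rangle$, so by definition $R_s(\langle \ell,m,n\rangle) \le R(T')$.

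Finally, the substitution (zeroing out variables plus relabeling by injections) is a restriction in the sense of bilinear algorithms, and restriction does not increase rank: any rank-one decomposition $\sum_\ell \alpha_\ell(\vx)\beta_\ell(\vy)\gamma_\ell(\vz)$ of $T_A$ pulls back to a rank-one decomposition of $T'$ of no greater length, by simply applying the same substitution to the linear forms $\alpha_\ell,\beta_\ell,\gamma_\ell$. Hence $R(T')\le R(T_A)$, and combining with the previous inequality gives $R_s(\langle \ell,m,n\rangle)\le R(T_A)$, as claimed. There is no real obstacle here; the only thing one must check carefully is that the ``only if'' direction of the realization condition is used to guarantee that $T'$'s support is exactly that of $\langle \ell,m,n\rangle$ rather than a strict superset.
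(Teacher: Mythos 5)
Your proposal is correct and follows essentially the same route as the paper: substitute $\vx_{\alpha(a,b')}$, $\vy_{\beta(b,c')}$, $\vz_{\gamma(c,a')}$ for new variables, zero out everything else, invoke the realization condition to collapse to a weighted $\langle \ell,m,n\rangle$, and use the fact that restriction cannot increase rank. The only difference is cosmetic, in that you spell out explicitly why restriction preserves rank bounds, which the paper leaves implicit.
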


\begin{proof}
Suppose $A$ realizes $\langle \ell,m,n \rangle$ via
$\alpha,\beta,\gamma$, and consider the structural tensor
\[
\sum_{i,j,k} \lambda_{i,j,k} \vx_i \vy_j \vz_k.
\]
Define $\vu_{a,b'} = \vx_{\alpha(a,b')}$, $\vv_{b,c'} =
\vy_{\beta(b,c')}$, and $\vw_{c,a'} = \vz_{\gamma(c,a')}$; furthermore,
set $\vx_i=0$ when $i$ is not in the image of $\alpha$, $\vy_j=0$ when
$j$ is not in the image of $\beta$, and $\vz_k=0$ when $k$ is not in
the image of $\gamma$.  Under this change of variables, the structural
tensor becomes
\[
\sum_{a,a',b,b',c,c'} \lambda_{\alpha(a,b'), \beta(b,c'), \gamma(c,a')} \vu_{a,b'}\, \vv_{b,c'}\, \vw_{c,a'},
\]
and by assumption the terms vanish unless $a=a'$, $b=b'$, and $c=c'$.
Thus,
\[
\sum_{a,b,c} \lambda_{\alpha(a,b), \beta(b,c), \gamma(c,a)} \vu_{a,b} \,\vv_{b,c}\, \vw_{c,a}
\]
has rank at most that of the structural tensor.  This new tensor is a
weighting of the matrix multiplication tensor $\langle \ell,m,n
\rangle$, so the s-rank of $\langle \ell,m,n\rangle$ is at most the
rank of the structural tensor.
\end{proof}

Note that this proof in fact gives a very simple algorithm for reducing
a weighted matrix multiplication to an algebra multiplication, along
the lines of the reduction in \cite{CU}.

Recall that an algebra is semisimple if it is a product of matrix
algebras.  In other words, it is semisimple if there are
\emph{character degrees} $d_1,\dots,d_t$ so that
\[
A \cong \C^{d_1 \times d_1} \times \dots \times \C^{d_t \times d_t}.
\]
In that case, the structural tensor is isomorphic to $\langle
d_1,d_1,d_1 \rangle \oplus \dots \oplus \langle d_t,d_t,d_t \rangle$.

\begin{proposition}
If a semisimple algebra $A$ with character degrees $d_1,\dots,d_t$
realizes $\langle \ell,m,n \rangle$, then
\[
(\ell m n)^{\omega_s/3} \le d_1^\omega + \dots + d_t^\omega.
\]
\end{proposition}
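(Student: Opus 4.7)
The plan is to pass to $N$-th tensor powers of $A$ and let $N \to \infty$. Both the realization property and semisimplicity are preserved by tensor products: if $A$ realizes $\langle \ell, m, n \rangle$ via injections $\alpha, \beta, \gamma$, then $A^{\otimes N}$ realizes $\langle \ell^N, m^N, n^N \rangle$ via the coordinatewise injections obtained by applying $\alpha$, $\beta$, $\gamma$ separately in each of the $N$ tensor factors. This works because the structure constants of $A^{\otimes N}$ in the tensor-product basis factor as products of structure constants of $A$, so such a product is nonzero exactly when each factor is nonzero, which by the realization property forces coordinatewise agreement of the three labels. Moreover $A^{\otimes N}$ is itself semisimple, with character degrees $\{d_{i_1} d_{i_2} \cdots d_{i_N} : (i_1, \ldots, i_N) \in [t]^N\}$; consequently its structural tensor is isomorphic to $\bigoplus_{(i_1, \ldots, i_N)} \langle d_{i_1} \cdots d_{i_N}, d_{i_1} \cdots d_{i_N}, d_{i_1} \cdots d_{i_N} \rangle$.

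Applying the previous proposition to $A^{\otimes N}$ together with subadditivity of rank under direct sums gives
\[
R_s(\langle \ell^N, m^N, n^N \rangle) \le \sum_{(i_1, \ldots, i_N) \in [t]^N} R(\langle d_{i_1} \cdots d_{i_N}, d_{i_1} \cdots d_{i_N}, d_{i_1} \cdots d_{i_N} \rangle).
\]
For any $\epsilon > 0$, the definition of $\omega$ as an infimum yields a single constant $C_\epsilon$ (independent of $d$) such that $R(\langle d, d, d \rangle) \le C_\epsilon d^{\omega + \epsilon}$ for every positive integer $d$. Plugging this bound into every summand and observing that the resulting sum over $N$-tuples of products factors as an $N$-th power, we get
\[
R_s(\langle \ell^N, m^N, n^N \rangle) \le C_\epsilon \biggl(\sum_{i=1}^t d_i^{\omega + \epsilon}\biggr)^N.
\]

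Combining this with Proposition~\ref{prop:s-rank-omega-bound} applied to $\langle \ell^N, m^N, n^N \rangle$, I obtain
\[
(\ell m n)^{N \omega_s / 3} \le C_\epsilon \biggl(\sum_{i=1}^t d_i^{\omega + \epsilon}\biggr)^N.
\]
Taking $N$-th roots and sending $N \to \infty$ eliminates $C_\epsilon$, and letting $\epsilon \to 0$ then yields the desired inequality $(\ell m n)^{\omega_s / 3} \le \sum_i d_i^\omega$. There is no genuine obstacle here; the one point requiring care is the extraction of a uniform (in $d$) constant $C_\epsilon$ from the definition of $\omega$, so that the same constant serves all $t^N$ summands simultaneously and disappears cleanly after the $N$-th root.
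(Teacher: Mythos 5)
Your proposal is correct and follows the same strategy as the paper's proof: bound $R_s(\langle \ell^N,m^N,n^N\rangle)$ via the realization in $A^{\otimes N}$ (whose character degrees are the $N$-fold products $d_{i_1}\cdots d_{i_N}$), insert the uniform bound $R(\langle d,d,d\rangle)\le C_\epsilon d^{\omega+\epsilon}$, factor the sum as an $N$-th power, take $N$-th roots and let $N\to\infty$, then let $\epsilon\to 0$. The only difference is that you spell out why the realization and semisimplicity persist under tensor powers, which the paper takes for granted.
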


This proposition also involves a natural algorithm, which reduces a
weighted matrix multiplication to a collection of unweighted matrix
multiplications.

\begin{proof}
For each $\varepsilon>0$, there is a constant $C$ such that $R(\langle
d,d,d \rangle) \le C d^{\omega+\varepsilon}$ for all $d$.  It follows
that
\[
(\ell m n)^{\omega_s/3} \le R_s(\langle \ell,m,n\rangle) \le C d_1^{\omega+\varepsilon} + \dots +C d_t^{\omega+\varepsilon},
\]
but the $C$ and $\varepsilon$ are problematic.  To remove them, we will
use the trick of computing the asymptotic rank for high tensor powers.
The algebra $A^{\otimes N}$ realizes a weighted version of $\langle
\ell,m,n \rangle^{\otimes N} =\langle \ell^N, m^N, n^N \rangle$, and it
has character degrees given by $N$-fold products $d_{i_1} \dots
d_{i_N}$. Thus,
\[
(\ell m n)^{N \omega_s/3} \le C \left( d_1^{\omega+\varepsilon} + \dots + d_t^{\omega+\varepsilon}\right)^N.
\]
Now taking $N$-th roots and letting $N$ tend to infinity yields
\[
(\ell m n)^{\omega_s/3} \le d_1^{\omega+\varepsilon} + \dots + d_t^{\omega+\varepsilon},
\]
and because this holds for all $\varepsilon>0$ it also holds
for $\varepsilon=0$ by continuity.
\end{proof}

Definition~\ref{definition:realize} generalizes the triple product
property from \cite{CU}.  Recall that three subsets $S,T,U$ of a group satisfy
the \emph{triple product property} if
\begin{gather*}
s^{-1}s't^{-1}t'u^{-1}u' = 1\\
\Leftrightarrow\\
s=s',\ t=t',\ u=u'
\end{gather*}
holds for $s,s' \in S$, $t,t' \in T$, and $u,u' \in U$.
To see why Definition~\ref{definition:realize} is a generalization,
suppose $A$ is the group algebra
of a finite group, and choose the group elements themselves as a basis.
Then $\alpha(a,b')$, $\beta(b,c')$, and $\gamma(c,a')$ correspond to
group elements $g_{a,b'}$, $h_{b,c'}$ and $k_{a',c}$ such that
\begin{equation} \label{eq:defeq}
g_{a,b'} h_{b,c'} = k_{a',c}
\end{equation}
if and only if $a=a'$, $b=b'$, and $c=c'$.  We wish to find group
elements $s_a,t_b,u_c$ such that $g_{a,b} = s_a t_b^{-1}$, $h_{b,c} =
t_b u_c^{-1}$, and $k_{a,c} = s_a u_c^{-1}$;  then
$\{s_a\},\{t_b\},\{u_c\}$ satisfy the triple product property.
To find these group elements, fix $b_0$, and let $s_a =
g_{a,b_0}$ and $u_c = h^{-1}_{b_0,c}$.  Then $s_a u_c^{-1} = k_{a,c}$
automatically. Furthermore, \eqref{eq:defeq} implies that
$g^{-1}_{a,b} g_{a,b_0} = h_{b_0,c} h^{-1}_{b,c}$, with this group
element being independent of $a$ and $c$.  Calling it $t_b$ completes
the construction.

\subsection{Coherent configurations} \label{subsec:coherent}

Coherent configurations are remarkable structures that unify much of
group theory and algebraic combinatorics \cite{Hig1,Hig2,Hig3}. A
\emph{coherent configuration} of rank $r$ is a finite set
$\mathcal{C}$, whose elements are called \emph{points}, with a
partition of $\mathcal{C}^2$ into subsets $R_1,R_2,\dots,R_r$ called
\emph{classes} such that
\begin{enumerate}
\item[(1)] the diagonal $\{(x,x) : x \in \mathcal{C}\}$ is the
    union of some of the classes,

\item[(2)] for each $i \in [r]$ there exists $i^* \in [r]$ such
    that $R_i^* = R_{i^*}$, where
\[
R_i^* = \{(b,a) : (a,b) \in R_i\},
\]
and

\item[(3)] there exist integers $p^k_{i,j}$ for $i,j,k \in [r]$
    such that for all $x,y \in \mathcal{C}$ with $(x,y) \in R_k$,
\[
\#\{z \in \mathcal{C} : \textup{$(x,z) \in R_i$ and $(z,y) \in R_j$}\} = p^k_{i,j}.
\]
\end{enumerate}
We say $\mathcal{C}$ is \emph{symmetric} if $R_i^*=R_i$ for all $i$ and
\emph{commutative} if $p^k_{i,j} = p^k_{j,i}$ for all $i,j,k$.
(Symmetry implies commutativity, but not vice versa.) The numbers
$p^k_{i,j}$ are called the \emph{intersection numbers} of the
configuration.  The configuration is an \emph{association scheme} if
the diagonal is itself one of the classes.
It is easily proved that a commutative coherent configuration must be an association scheme \cite[p.~14]{Hig3}.

Every finite group $G$ defines an association scheme, with $G$ as its
set of points and $G^2$ partitioned into subsets $R_g = \{(h,hg) : h
\in G\}$ with $g \in G$.  Then for $g,h,k \in G$,
\[
p^k_{g,h} = \begin{cases} 1 & \textup{if $gh=k$, and}\\
0 & \textup{otherwise.}
\end{cases}
\]
The intersection numbers encode the multiplication table of the
group, so the group and the corresponding association scheme
are fully equivalent structures.  Note that this association
scheme is commutative iff $G$ is, while it is symmetric iff $g
= g^{-1}$ for all $g \in G$. One can show that an association
scheme comes from a group in this way if and only if all its
intersection numbers are at most $1$.

More generally, suppose $G$ acts on a finite set $X$.  Then
partitioning $X^2$ into the orbits of $G$ under the diagonal
action defines a coherent configuration, called a
\emph{Schurian} coherent configuration. It is an association
scheme iff $G$ acts transitively on $X$.

Many important examples in combinatorics fit into this framework.  For
example, the Hamming scheme consists of the points in $\{0,1\}^n$ with
classes defined by Hamming distance. From a group-theoretic
perspective, it is the Schurian association scheme defined by the
action of the semidirect product $S_n \ltimes (\Z/2\Z)^n$ on
$\{0,1\}^n$, although this formulation is excessive for most purposes.

If $G$ acts transitively on $X$, then we can identify $X$ with
$G/H$, where $H$ is the stabilizer of a point in $X$.  Note
that $G/H$ is not a group unless $H$ is a normal subgroup, but
it is always an association scheme.  In certain cases, called
\emph{Gelfand pairs} $(G,H)$, the quotient $G/H$ is a
commutative association scheme (although the groups $G$ and $H$
will typically not be commutative). For example, this occurs
for the Hamming scheme.

There are also numerous combinatorial examples of association
schemes and coherent configurations that do not come from
symmetry groups.  For example, strongly regular graphs are the
same thing as symmetric association schemes of rank $3$.  More
generally, every distance-regular graph is an association
scheme when the classes are defined by the graph metric.  Some of
these graphs are Schurian association schemes, but many are not.

A \emph{fusion} of a coherent configuration $\mathcal{C}$ is a
configuration $\mathcal{C}'$ with the same set of points and with the
classes of $\mathcal{C}'$ all given by unions of classes of
$\mathcal{C}$.  (Note that this must be done carefully, since taking
arbitrary unions will generally not yield a coherent configuration.)
Another important construction is the \emph{direct product}: given two
coherent configurations $\mathcal{C}$ and $\mathcal{C}'$, their product
$\mathcal{C} \times \mathcal{C}'$ has the direct product of their point
sets as its point set, with the class of $(c_1,c_1')$ and $(c_2,c_2')$
determined by the class of $(c_1,c_2)$ in $\mathcal{C}$ and that of
$(c_1',c_2')$ in $\mathcal{C}'$.  The \emph{symmetric power} $\Sym^k
\mathcal{C}$ is the fusion scheme formed by fusing the classes of the
direct power $\mathcal{C}^k$ under the action of the symmetric group
$S_k$ on the factors.

\subsection{The adjacency algebra} \label{subsec:adjacency}

Every coherent configuration has an associated algebra, which plays the
same role as the group algebra of a group. Let $A_1,\dots,A_r$ be the
adjacency matrices of the relations $R_1,\dots,R_r$.  In other words,
$A_i$ is indexed by $\mathcal{C}$, with
\[
(A_i)_{x,y} = \begin{cases} 1 & \textup{if $(x,y) \in R_i$, and}\\
0 & \textup{otherwise}
\end{cases}
\]
for $x,y \in \mathcal{C}$.  The \emph{adjacency algebra}
$\C[\mathcal{C}]$ of $\mathcal{C}$ is the complex algebra generated by
these adjacency matrices.  (Note that it contains the identity because
the diagonal is a union of classes.) An easy calculation shows that
\[
A_i A_j = \sum_k p^k_{i,j} A_k,
\]
so $\C[\mathcal{C}]$ is spanned by $A_1,\dots,A_r$.  It is a
commutative algebra if and only if $\mathcal{C}$ is
commutative.

The adjacency algebra is closed under the conjugate transpose, so it is
a semisimple algebra (see, for example, Theorem~3.2 in \cite{Cameron}).
Thus, there exist \emph{character degrees}
$d_1,\dots,d_k$ such that
\[
\C[\mathcal{C}] \cong \C^{d_1 \times d_1} \times \dots \times \C^{d_k \times d_k}.
\]
Of course, $d_1^2+\dots+d_k^2$ must equal the dimension of
$\C[\mathcal{C}]$, which is the rank of $\mathcal{C}$.  The adjacency
algebra of a commutative coherent configuration of rank $r$ is
isomorphic to $\C^r$.

The structural tensor of $\C[\mathcal{C}]$ is
\[
\sum_{i,j,k \in [r]} p^k_{i,j} \vx_i \vy_j \vz_k,
\]
but (as we will see shortly) it is often convenient to use
\[
\sum_{i,j,k \in [r]} p^{k^*}_{i,j} \vx_i \vy_j \vz_k
\]
instead.  This isomorphic tensor simply amounts to reordering the
variables $\vz_k$.  Note that the rank $r$ of the coherent
configuration is not necessarily the same as the rank of the structural
tensor: they are equal if and only if the configuration is commutative.

\subsection{Embedding matrix multiplication into an adjacency
algebra} \label{subsec:embedadj}

Let $\mathcal{C}$ be a coherent configuration of rank $r$, with
notation as in the previous subsection.

\begin{definition}
Three classes $i,j,k$ form a \emph{triangle} if there exist
points $x, y, z$ such that $(x,y) \in R_i$, $(y,z) \in R_j$,
and $(z, x) \in R_k$.
\end{definition}

In terms of intersection numbers, classes $i,j,k$ form a triangle iff
$p^{k^*}_{i,j} > 0$.  (Note that we use $k^*$ instead of $k$ to switch
the order of $x$ and $z$.)  This is why we prefer to use $k^*$ instead
of $k$ in the structural tensor: otherwise, the cyclic symmetry among
$x,y,z$ is broken.

\begin{definition}
A coherent configuration ${\cal C}$ of rank $r$ \emph{realizes}
$\langle \ell,m,n \rangle$ if there exist three injective functions
\[
\alpha\colon[\ell]\times[m] \to [r], \quad
\beta\colon[m]\times[n] \to [r], \quad \gamma\colon[n]\times[\ell]
\to [r]
\]
such that $\alpha(a,b'), \beta(b,c'), \gamma(c,a')$ form a triangle iff
$a = a'$, $b=b'$, and $c = c'$.
\end{definition}

Of course this definition assumes a fixed numbering of the
classes in $\mathcal{C}$.  It amounts to the general definition
of realization in an algebra, specialized to our choice of
structural tensor.

\begin{example} \label{example:trivial}
As a simple example, let $\mathcal{C}$ be the coherent
configuration on $n$ points for which every pair of points
defines a distinct class. If we index the classes with pairs of
points, then $(a,b')$, $(b,c')$, and $(c,a')$ form a triangle
if and only if $a=a'$, $b=b'$, and $c=c'$, so $\C[\mathcal{C}]$
trivially realizes $\langle n,n,n \rangle$.  As one might
expect from such a trivial example, the embedding yields no
benefit for matrix multiplication, because in fact
$\C[\mathcal{C}] \cong \C^{n \times n}$.
\end{example}

In the next section we will construct less trivial examples.
In the meantime, we note the following proposition, which works
out the conditions for a coherent configuration arising from a
group action to realize matrix multiplication.

\begin{proposition} \label{proposition:action}
Let $G$ be a finite group acting on a set $X$, and let $\mathcal{C}$ be
the corresponding Schurian coherent configuration. Suppose there exist
subsets $A,B,C \subseteq X$ such that for all $f,g,h \in G$ and all
$a\in A$, $b \in B$, and $c\in C$,
\begin{multline*}
\textup{if $fa \in A$, $gb \in B$, $hc \in C$ and $fgh=1$,}\\
\textup{then $fa=a$, $gb=b$,
and $hc=c$.}
\end{multline*}
Then $\mathcal{C}$ realizes $\langle |A|, |B|, |C| \rangle$.
\end{proposition}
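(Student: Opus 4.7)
The plan is to define $\alpha, \beta, \gamma$ by sending each pair of points to the index of its $G$-orbit in $X^2$: after identifying $A, B, C$ with $[|A|], [|B|], [|C|]$, let $\alpha(a, b')$, $\beta(b, c')$, and $\gamma(c, a')$ be the class indices of $(a, b')$, $(b, c')$, and $(c, a')$ in $\mathcal{C}$. It then remains to verify that each of the three maps is injective and that the triangle characterization holds.

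For the triangle condition, the ``if'' direction is immediate: if $a = a'$, $b = b'$, $c = c'$, then $(x, y, z) = (a, b, c)$ witnesses a triangle via the identity element of $G$. For ``only if'', the plan is to unpack the triangle into group data: if the three classes form a triangle via $x, y, z \in X$, there exist $f, g, h \in G$ with $f(a, b') = (x, y)$, $g(b, c') = (y, z)$, and $h(c, a') = (z, x)$. Matching coordinates gives $fa = ha'$, $fb' = gb$, and $gc' = hc$. The key move is to set $p = h^{-1}f$, $q = f^{-1}g$, $r = g^{-1}h$: a short computation gives $pqr = 1$, together with $pa = a' \in A$, $qb = b' \in B$, and $rc = c' \in C$. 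Applying the hypothesis to base points $a, b, c$ and elements $p, q, r$ then forces $pa = a$, $qb = b$, and $rc = c$, i.e., $a = a'$, $b = b'$, and $c = c'$.

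For injectivity of $\alpha$ (the other two cases being symmetric), suppose $(a_1, b_1)$ and $(a_2, b_2)$ lie in a common $G$-orbit, so $f a_1 = a_2$ and $f b_1 = b_2$ for some $f \in G$. Choose any $c_0 \in C$ (the claim is vacuous if $C$ is empty) and apply the hypothesis with base points $(a_1, b_2, c_0)$ and group elements $(f, f^{-1}, 1)$, whose product is $1$. The three ``in-set'' conditions become $f a_1 = a_2 \in A$, $f^{-1} b_2 = b_1 \in B$, and $c_0 \in C$, all of which hold; the hypothesis therefore forces $f a_1 = a_1$ and $f^{-1} b_2 = b_2$, giving $a_1 = a_2$ and $b_1 = b_2$.

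The main obstacle, and the one creative step, is the reparametrization $p = h^{-1}f$, $q = f^{-1}g$, $r = g^{-1}h$ in the triangle argument: these three specific quotients convert the geometric ``triangle-closure'' data into three group elements whose product is the identity and which map the base points $a, b, c$ back into $A, B, C$, matching exactly the form of the hypothesis. Once this substitution is in place everything else is routine bookkeeping, and the $(f, f^{-1}, 1)$ triple used for injectivity is an easy instance of the same idea.
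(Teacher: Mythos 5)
Your proof is correct and takes essentially the same approach as the paper's: the maps $\alpha,\beta,\gamma$ are defined via $G$-orbits, injectivity is handled by applying the hypothesis with the triple $(f,f^{-1},1)$, and the triangle direction uses exactly the same reparametrization (your $p=h^{-1}f$, $q=f^{-1}g$, $r=g^{-1}h$ is the paper's $f=u^{-1}s$, $g=s^{-1}t$, $h=t^{-1}u$ under a renaming). The only small additions you make beyond the paper's write-up are spelling out the trivial ``if'' direction and noting explicitly that the injectivity argument needs $C$ nonempty.
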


\begin{proof}
Recall that the classes of $\mathcal{C}$ are the orbits of $G$ on
$X^2$.  If we identify $A$ with $[|A|]$, etc., then we realize $\langle
|A|, |B|, |C| \rangle$ via maps $\alpha,\beta,\gamma$ such that for $a
\in A$ and $b' \in B$, $\alpha(a,b')$ is the orbit of $(a,b')$ in
$X^2$, etc.
The map $\alpha$ is injective, because $\alpha(a, b) = \alpha(a', b')$
implies $fa = a'$ and $fb = b'$ for some $f \in G$, and the hypothesis
then implies $a = a'$ and $b = b'$ (take $g=f^{-1}$ and $h=1$);
the maps $\beta$ and $\gamma$ are injective by the same argument.
Now we wish to show that the classes of $(a,b')$, $(b,c')$,
and $(c,a')$ form a triangle if and only if $a=a'$, $b=b'$, and $c=c'$
(where $a,a' \in A$, $b,b' \in B$, and $c,c' \in C$).  Saying they form
a triangle means there exist $x,y,z \in X$ and $s,t,u \in G$ such that
$(x,y) = (sa,sb')$, $(y,z) = (tb,tc')$, and $(z,x) = (uc,ua')$.  If we
set $f = u^{-1}s$, $g =s^{-1}t$, and $h=t^{-1}u$, then $fgh=1$ and $fa
= a'$, $gb = b'$, and $hc=c'$.  Now by hypothesis we have $a=a'$,
$b=b'$, and $c=c'$, as desired.
\end{proof}

If we let $G$ act on itself by left translation, then the hypothesis of
Proposition~\ref{proposition:action} simply asserts that $A$, $B$, and
$C$ satisfy the triple product property from \cite{CU}.  Thus, the
proposition gives a natural generalization of the triple product
property from groups to group actions.

\section{Simultaneous embeddings and symmetric powers}

Our best construction techniques so far are all based on realizing
several independent matrix multiplications simultaneously; this was
called the \emph{simultaneous triple product property} in \cite{CKSU}.
In a coherent configuration, the definition amounts to the following
(and of course one can give an analogous definition in any algebra):

\begin{definition} \label{definition:simultaneous}
A coherent configuration ${\cal C}$ of rank $r$ \emph{realizes}
$\oplus_i \langle \ell_i,m_i,n_i \rangle$ if there exist injective
functions
\begin{align*}
\alpha_i\colon[\ell_i]\times[m_i] \to [r]\\
\beta_i\colon[m_i]\times[n_i] \to [r]\\
\gamma_i\colon[n_i]\times[\ell_i] \to [r]
\end{align*}
such that $\alpha_i(a,b'),
\beta_j(b,c'), \gamma_k(c, a')$ form a triangle iff $i = j = k$ and $a
= a'$, $b=b'$, and $c = c'$.
\end{definition}

If $\mathcal{C}$ realizes $\langle \ell_1,m_1,n_1 \rangle \oplus \dots
\oplus \langle \ell_k,m_k,n_k \rangle$ and $T$ is its structural
tensor, then
\[
R_s\big(\langle\ell_1,m_1,n_1 \rangle \oplus \dots
\oplus \langle \ell_k,m_k,n_k\rangle\big) \le R(T).
\]
One can imitate the proof of the asymptotic sum inequality to show that
\begin{multline}
\label{eqn:s-rank-asi}
\big(\ell_1 m_1 n_1\big)^{\omega_s/3} + \dots + \big(\ell_k m_k n_k \big)^{\omega_s/3}\\
\le R_s\big(\langle\ell_1,m_1,n_1 \rangle \oplus \dots
\oplus \langle \ell_k,m_k,n_k\rangle\big),
\end{multline}
from which one can deduce bounds on $\omega_s$.  Instead, in this
section we will develop an efficient algebraic method to combine these
independent matrix multiplication realizations into one.  It will yield
the same bound, but also show that this bound is achieved by realizing
a single matrix multiplication in a coherent configuration.  First, we give an example.
This example is extremal, because it realizes the direct sum of $n^{1-o(1)}$
copies of $\langle n,n,n \rangle$ via a coherent configuration of rank $n^3$,
and this cannot be done with rank less than $n^{3-o(1)}$ (because the images of
the embeddings must be disjoint).  If the coherent configuration were commutative,
then we could conclude that $\omega=2$, but it is far from commutative.

\begin{example}
Let $\mathcal{C}$ be the coherent configuration corresponding to
the diagonal action of $\Z/n\Z$ on $(\Z/n\Z)^2$, and let $S \subseteq
\Z/n\Z$ be a set of size $|S|=n^{1-o(1)}$ containing no three-term
arithmetic progression \cite{SS}.  We can index the classes in
$\mathcal{C}$ as
\[
R_{(a,b,c)} = \{((s,s+a),(s+a+b,s+a+b+c)) : s \in \Z/n\Z\},
\]
with $a,b,c \in \Z/n\Z$.  Then $\mathcal{C}$ realizes $\oplus_{i\in S}
\langle n,n,n \rangle$ via maps $\alpha_i,\beta_i,\gamma_i$ defined for
$i \in S$ by
\begin{align*}
\alpha_i(x,y) &= (x,i-x,y)\\
\beta_i(y,z) &= (y,i-y,z)\\
\gamma_i(z,x) &= (z,-2i-z,x).
\end{align*}
Specifically, it is not hard to check that
$(x,i-x,y'),(y,j-y,z'),(z,-2k-z,x')$ form a triangle if and only if
$x=x'$, $y=y'$, $z=z'$, and $i+j=2k$ (in which case $i=j=k$ because $S$
contains no three-term arithmetic progressions).  However, this example
does not prove any nontrivial bound on $\omega$, because in fact the
character degrees of $\mathcal{C}$ are all equal to $n$ (repeated $n$
times).
\end{example}

As promised, we now give a constructive proof of
\eqref{eqn:s-rank-asi}. The proof converts a coherent
configuration that realizes several independent matrix
multiplications into a single coherent configuration
that realizes a \emph{single} matrix multiplication. Moreover, the
resulting coherent configuration is commutative if the original
one was. Because the proof actually constructs a coherent
configuration rather than just deducing the bound on
$\omega_s$, we can use it to obtain \emph{commutative} coherent
configurations that prove nontrivial
bounds on $\omega$. This establishes one of the main points of
the paper: that the noncommutativity that was necessary in the
group-theoretic approach can be avoided in the generalization
to coherent configurations.

We also find that a consequence of either of the two main conjectures of \cite{CKSU} is
that \emph{commutative} coherent configurations
suffice to prove $\omega=2$. This raises our hope that one
could find commutative coherent configurations of rank $n^{2+o(1)}$
that realize $\langle n,n,n \rangle$ and thus prove $\omega=2$.

The main idea of the proof is to take symmetric powers, as described next:
\begin{theorem}
\label{thm:symmetrized-subalgebra} Let ${\cal C}$ be a coherent
configuration of rank $r$ that realizes $\oplus_{i = 1}^k \langle
\ell_i,m_i,n_i \rangle$. Then the symmetric power $\Sym^k \mathcal{C}$
realizes $\langle \prod_i \ell_i, \prod_i m_i, \prod_i n_i\rangle$ and
has rank ${{r+ k-1} \choose k}$.
\end{theorem}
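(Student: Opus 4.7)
The plan is to lift the simultaneous realization componentwise into $\Sym^k\mathcal{C}$. The rank computation is immediate: classes of the direct power $\mathcal{C}^k$ are $k$-tuples in $[r]^k$, and fusing under the $S_k$ action on factors identifies tuples that differ by a permutation, so classes of $\Sym^k\mathcal{C}$ correspond to multisets of size $k$ from $[r]$, yielding the claimed rank $\binom{r+k-1}{k}$.

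For the realization, identify $[\prod_i \ell_i]$ with $\prod_i[\ell_i]$ and similarly for the $m_i$ and $n_i$, and define $\alpha(a,b)$ to be the class of $\Sym^k\mathcal{C}$ represented by the tuple $(\alpha_1(a_1,b_1),\dots,\alpha_k(a_k,b_k))$, with $\beta$ and $\gamma$ defined analogously. A preliminary observation is that the images $\alpha_1([\ell_1]\times[m_1]),\dots,\alpha_k([\ell_k]\times[m_k])$ are pairwise disjoint in $[r]$, and similarly for the $\beta_i$ and $\gamma_i$: if $\alpha_i(a,b) = \alpha_j(a'',b''')$ with $i\ne j$, then any triangle in $\mathcal{C}$ with first label $\alpha_i(a,b)$ (such triangles exist by the ``if'' direction of Definition~\ref{definition:simultaneous}) would simultaneously be one with first label $\alpha_j(a'',b''')$, contradicting the ``only if'' direction across mismatched indices. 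Together with componentwise injectivity of each $\alpha_i$, disjointness immediately yields injectivity of $\alpha$ as a function of multisets: any aligning bijection between $(\alpha_j(a_j,b_j))_j$ and $(\alpha_j(a'_j,b'_j))_j$ must be the identity, whereupon $a=a'$ and $b=b'$ follow.

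The substantive step is the triangle condition. Suppose $\alpha(a,b'),\beta(b,c'),\gamma(c,a')$ form a triangle in $\Sym^k\mathcal{C}$, witnessed by points $x,y,z\in\mathcal{C}^k$. Because $\Sym^k$-classes are $S_k$-orbits, I may jointly relabel $x,y,z$ by a single global permutation so that $(x_j,y_j)\in R_{\alpha_j(a_j,b'_j)}$ for every $j$, without disturbing any of the three $\Sym^k$-classes in the triangle. The remaining two pairs then match their target multisets only up to further permutations $\pi,\rho\in S_k$: one has $(y_j,z_j)\in R_{\beta_{\pi(j)}(b_{\pi(j)},c'_{\pi(j)})}$ and $(z_j,x_j)\in R_{\gamma_{\rho(j)}(c_{\rho(j)},a'_{\rho(j)})}$. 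For each $j$, the triple $(x_j,y_j,z_j)$ is a triangle in $\mathcal{C}$ whose labels come from the index triple $(j,\pi(j),\rho(j))$, so the simultaneous triple product property forces $j=\pi(j)=\rho(j)$ together with the coordinate equalities $a_j=a'_j$, $b_j=b'_j$, $c_j=c'_j$; letting $j$ range over $[k]$ gives $a=a'$, $b=b'$, $c=c'$. The converse is immediate by gluing per-index triangles from $\mathcal{C}$ into a single triangle in $\Sym^k\mathcal{C}$. The main obstacle I anticipate is precisely this permutation bookkeeping: one global $S_k$-alignment of $(x,y,z)$ is free, but $\pi$ and $\rho$ must be forced to the identity \emph{independently}, and this is exactly what the simultaneous triple product property delivers when invoked at each index separately.
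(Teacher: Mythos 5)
Your proof is correct and follows essentially the same route as the paper: the rank count via multisets, injectivity in the fusion via pairwise disjointness of the images of the $\alpha_i$ (which the paper asserts ``follows immediately'' and you spell out), and forcing the permutations to the identity at each index by invoking the simultaneous triple product property coordinatewise. The only cosmetic difference is that you normalize the first pair's permutation to the identity up front, whereas the paper carries three permutations $\pi_1,\pi_2,\pi_3$ and argues they coincide; these are the same argument.
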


\begin{proof}
The classes $R_I$ of the $k$-fold direct product of ${\cal C}$ are
indexed by vectors $I \in [r]^k$. The symmetric group $S_k$ acts on
$[r]^k$ by permuting the $k$ coordinates, and the orbits of this action
naturally correspond to the $k$-multisubsets of $[r]$. Recall that $\Sym^k
\mathcal{C}$ is the fusion configuration with a class for each
orbit; i.e., for each $k$-multisubset $S$ of $r$, we have a class $R_S'$
that is the union of $R_I$ over all $I$ in the orbit corresponding to
$S$. The rank of $\Sym^k \mathcal{C}$ is the number of distinct orbits
(equivalently, the number of distinct $k$-multisubsets of $[r]$), which is
${{r + k-1} \choose k}$.

Set $L = \prod_i \ell_i$, $M= \prod_i m_i$, and $N = \prod_i n_i$. Now,
${\cal C}^k$ realizes $\langle L, M, N \rangle$, so there exist
injective functions
\begin{align*}
\alpha\colon[L]\times[M] \to [r]^k\\
\beta\colon[M]\times[N] \to [r]^k\\
\gamma\colon[N]\times[L]\to [r]^k
\end{align*}
satisfying the conditions of Definition~\ref{definition:realize};
specifically,
\begin{align*}
\alpha(A, B) &=  (\alpha_1(A_1,B_1),\dots,\alpha_k(A_k,B_k)) \\
\beta(B, C) &=  (\beta_1(B_1,C_1),\dots,\beta_k(B_k,C_k)) \\
\gamma(C,A) &=  (\gamma_1(C_1,A_1),\dots,\gamma_k(C_k,A_k)),
\end{align*}
where $\mathcal{C}$ realizes $\oplus_{i = 1}^k \langle \ell_i,m_i,n_i
\rangle$ via $\alpha_i,\beta_i,\gamma_i$.

We claim that in fact $\alpha, \beta, \gamma$ are injective even in the
fusion configuration, where we collapse the orbits. For suppose that
$\alpha(A,B)= \pi \alpha(A', B')$ for some $\pi \in S_k$. Then $\pi$
must be the identity since the maps $\alpha_i$ have disjoint images in
$[r]$ (this follows immediately from
Definition~\ref{definition:simultaneous}), and then by injectivity of
$\alpha_i$ we have $(A, B) = (A',B')$. The same holds for $\beta$ and
$\gamma$.

Moreover, the orbits of $\alpha(A,B'), \beta(B,C'), \gamma(C, A')$ form
a triangle in $\Sym^k \mathcal{C}$ iff $A = A'$, $B= B'$, and $C= C'$.
For suppose there exist points $X,Y,Z$ and permutations $\pi_1, \pi_2,
\pi_3 \in S_k$ for which $(X,Y) \in R_I$, $(Y,Z) \in R_J$ and $(Z,X)
\in R_K$, where $I = \pi_1\alpha(A,B')$, $J = \pi_2 \beta(B,C')$, and
$K = \pi_3\gamma(C, A')$. Then we must have $\pi_1 = \pi_2 = \pi_3$
because $\alpha_i(A_i, B_i'), \beta_j(B_j,C_j'), \gamma_k(C_k, A_k')$
cannot form a triangle unless $i = j = k$, and then $A' = A$, $B=B'$
and $C = C'$ follow from the fact that these equalities hold for each
coordinate (by properties of $\alpha_i, \beta_i, \gamma_i$). Thus
$\Sym^k \mathcal{C}$ realizes $\langle L, M, N\rangle$, as claimed.
\end{proof}

\begin{corollary}
\label{cor:asi-geometric-mean-version} Let ${\cal C}$ be a commutative
coherent configuration of rank $r$ that realizes $\oplus_{i = 1}^k
\langle \ell_i,m_i,n_i \rangle$. Then symmetric powers of direct powers
of ${\cal C}$ prove the bound
\[k\cdot\left (\left (\prod_{i=1}^k \ell_im_in_i\right )^{1/k}\right )^{\omega_s/3} \le r.\]
\end{corollary}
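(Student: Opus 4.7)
The plan is to take a large direct power $\mathcal{C}^N$, use Theorem~\ref{thm:symmetrized-subalgebra} to fuse the resulting simultaneous embeddings into a single realization, and send $N\to\infty$ to wash out the combinatorial overhead introduced by the binomial coefficient appearing in that theorem.

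First I would check that $\mathcal{C}^N$ realizes (in the simultaneous sense) the direct sum $\bigoplus_{\mathbf{i}\in[k]^N}\langle\prod_j\ell_{i_j},\prod_j m_{i_j},\prod_j n_{i_j}\rangle$, via the coordinate-wise maps $\alpha_{\mathbf{i}}=(\alpha_{i_1},\dots,\alpha_{i_N})$ and the analogous $\beta_{\mathbf{i}},\gamma_{\mathbf{i}}$; this works because triangles in a direct product of coherent configurations are exactly coordinate-wise triangles, so the simultaneous realization property lifts from $\mathcal{C}$ to $\mathcal{C}^N$. Applying Theorem~\ref{thm:symmetrized-subalgebra} to these $k^N$ summands gives that $\Sym^{k^N}(\mathcal{C}^N)$ realizes a single matrix multiplication $\langle L,M,N'\rangle$ and has rank $\binom{r^N+k^N-1}{k^N}$. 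A symmetric count (each $a\in[k]$ appears as $i_j$ in exactly $Nk^{N-1}$ pairs $(\mathbf{i},j)$) yields $L=\prod_a\ell_a^{Nk^{N-1}}$, and similarly for $M,N'$, so $LMN'=G^{Nk^{N-1}}$ where $G=\prod_i\ell_i m_i n_i$. Commutativity is preserved under direct and symmetric powers, and the structural tensor of a commutative rank-$s$ adjacency algebra is isomorphic to $\langle 1,1,1\rangle^{\oplus s}$ and so has rank $s$; combining these observations with Proposition~\ref{prop:s-rank-omega-bound} gives
\[G^{Nk^{N-1}\omega_s/3}\le\binom{r^N+k^N-1}{k^N}.\]

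To finish, I would note that $r\ge k$ (the images of $\alpha_1,\dots,\alpha_k$ are pairwise disjoint nonempty subsets of $[r]$, which is forced by Definition~\ref{definition:simultaneous}), so $\binom{r^N+k^N-1}{k^N}\le(2er^N/k^N)^{k^N}$ by the elementary estimate $\binom{n}{k}\le(en/k)^k$. Taking logarithms and dividing by $Nk^{N-1}$ yields $(\omega_s/3)\log G\le k\log(r/k)+O(k/N)$, and letting $N\to\infty$ gives $G^{\omega_s/3}\le(r/k)^k$, equivalently $k\cdot G^{\omega_s/(3k)}\le r$, which is the stated bound. I expect the main obstacle to be the verification that simultaneous realization tensors cleanly, i.e., that $\mathcal{C}^N$ inherits the simultaneous property (including disjointness of images across the $k^N$ embeddings) rather than only giving $k^N$ independent realizations; once that is in place the binomial asymptotics and averaging are routine.
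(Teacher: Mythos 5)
Your proposal matches the paper's proof essentially line for line: take direct powers $\mathcal{C}^t$ to get $k^t$ equal-sized simultaneous realizations, apply Theorem~\ref{thm:symmetrized-subalgebra} (with symmetric power $\Sym^{k^t}$, correcting the paper's apparent typo of $\Sym^k$) to get a single realization with rank $\binom{r^t+k^t-1}{k^t}$, bound that binomial by $(2er^t/k^t)^{k^t}$ using $k\le r$, and let $t\to\infty$. The one step you flag as a possible obstacle---that the simultaneous realization and disjointness of images tensor cleanly over direct products---is indeed the point the paper takes for granted, and your justification via componentwise triangles and disjoint images from Definition~\ref{definition:simultaneous} is exactly right.
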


More precisely, they come arbitrarily close to this bound.

\begin{proof}
By taking direct powers, ${\cal C}^t$ realizes
\[
\bigoplus_{I \in [k]^t}
\big\langle \prod_{j \in [t]}\ell_{I_j}, \prod_{j \in [t]}m_{I_j},\prod_{j
\in [t]}n_{I_j} \big\rangle.
\]
Setting $L = \prod_{i \in k} \ell_i$, $M =
\prod_{i \in k} m_i$ and $N = \prod_{i \in k} n_i$, we have by
Theorem~\ref{thm:symmetrized-subalgebra} that $\Sym^k {\cal C}^t$
realizes \[\langle L^{tk^{t-1}}, M^{tk^{t-1}}, N^{tk^{t-1}}\rangle\] and
has rank ${{r^t+ k^t-1} \choose k^t}$. By Proposition
\ref{prop:s-rank-omega-bound} we have
\begin{align*}
(LMN)^{\omega_stk^{t-1}/3} &\le {{r^t+ k^t-1} \choose k^t}\\
&\le \left (\frac{e(r^t+k^t-1)}{k^t} \right )^{k^t}\\
&\le \left (\frac{2er^t}{k^t}\right)^{k^t},
\end{align*}
where the last inequality uses the fact that $k \le r$. Taking
$tk^t$-th roots and letting $t$ go to infinity, we obtain the bound
$(LMN)^{\omega_s/(3k)} \le r/k.$
\end{proof}

By weighting the independent matrix multiplications
appropriately, we find that the geometric mean can be replaced
by the arithmetic mean, to obtain a bound on $\omega_s$
identical to the asymptotic sum inequality \eqref{eqn:asi}:

\begin{theorem} \label{theorem:asi}
Let ${\cal C}$ be a commutative coherent configuration of rank $r$ that
realizes $\oplus_{i = 1}^k \langle \ell_i,m_i,n_i \rangle$. Then
symmetric powers of direct powers of ${\cal C}$ prove the bound $\sum_i
(\ell_im_in_i)^{\omega_s/3} \le r$.
\end{theorem}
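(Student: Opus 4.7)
The plan is to upgrade the geometric-mean bound of Corollary~\ref{cor:asi-geometric-mean-version} to the desired arithmetic-mean bound via the standard Sch\"onhage-style trick of passing to high direct powers and then summing over multinomial compositions. Set $a_i = (\ell_i m_i n_i)^{\omega_s/3}$, so that the goal becomes $\sum_i a_i \le r$.

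First I would pass to the direct power ${\cal C}^t$, which remains commutative and realizes
\[
\bigoplus_{I \in [k]^t}\ \langle L_I, M_I, N_I\rangle,
\]
where $L_I = \prod_j \ell_{I_j}$, $M_I = \prod_j m_{I_j}$, and $N_I = \prod_j n_{I_j}$. For each composition $(t_1,\dots,t_k)$ of $t$, the $\binom{t}{t_1,\dots,t_k}$ summands indexed by the $I$'s of that composition are all isomorphic to a common matrix multiplication tensor with $\ell m n = \prod_i (\ell_i m_i n_i)^{t_i}$. Restricting the embedding maps to any subset of indices preserves injectivity and the triangle condition, so ${\cal C}^t$ also realizes each such sub-direct-sum.

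Next, apply Corollary~\ref{cor:asi-geometric-mean-version} to this sub-realization inside ${\cal C}^t$. Because the summands are uniform, the geometric mean equals the common value of $\ell m n$, and the corollary delivers
\[
\binom{t}{t_1,\dots,t_k}\prod_i a_i^{t_i} \le r^t,
\]
via a symmetric power of ${\cal C}^t$, which is a symmetric power of a direct power of ${\cal C}$. Summing this inequality over all $\binom{t+k-1}{k-1}$ compositions of $t$ and invoking the multinomial theorem yields
\[
\Bigl(\sum_i a_i\Bigr)^t = \sum_{(t_1,\dots,t_k)} \binom{t}{t_1,\dots,t_k}\prod_i a_i^{t_i} \le \binom{t+k-1}{k-1} r^t.
\]
Taking $t$-th roots and letting $t \to \infty$ gives $\sum_i a_i \le r$, since $\binom{t+k-1}{k-1}^{1/t} \to 1$.

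There is no substantial obstacle beyond setting up the bookkeeping; the only point to verify is that restricting a realization of a direct sum to a sub-direct-sum is itself a realization, which is immediate from the definition by restricting the domains of the embedding maps. Commutativity is preserved under both direct and symmetric powers, so Corollary~\ref{cor:asi-geometric-mean-version} genuinely applies at each step, and the final construction is indeed a symmetric power of a direct power of ${\cal C}$, as the theorem demands.
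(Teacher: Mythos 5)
Your proof is correct and follows essentially the same route as the paper: take the direct power $\mathcal{C}^t$, restrict for each composition $\mu$ of $t$ to the $\binom{t}{\mu}$ summands of identical size, apply Corollary~\ref{cor:asi-geometric-mean-version} to get $\binom{t}{\mu}\prod_i a_i^{\mu_i} \le r^t$, sum over compositions via the multinomial theorem, and take $t$-th roots as $t\to\infty$. The only cosmetic difference is that you explicitly note that restricting a realization to a sub-direct-sum is again a realization, which the paper leaves implicit.
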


\begin{proof}
Fix an integer $N$ and $\mu =(\mu_1, \ldots, \mu_k)$ satisfying $\mu_i
\ge 0$ and $\sum_i \mu_i = N$. Then the direct product ${\cal C}^N$
realizes $L = {N \choose \mu}$ independent copies of $\langle
\prod_i\ell_i^{\mu_i}, \prod_im_i^{\mu_i}, \prod_in_i^{\mu_i} \rangle$
(the key is that now these are all the same size). Applying Corollary
\ref{cor:asi-geometric-mean-version}, we find that symmetric powers of
direct powers of ${\cal C}$ prove the bound
\begin{equation}
\label{eq:asi-for-given-distribution}
{N \choose \mu}\prod_i(\ell_im_in_i)^{\mu_i\omega_s/3} \le r^N.
\end{equation}
Summing this inequality over all $\mu$ gives
\[\left (\sum_i(\ell_im_in_i)^{\omega_s/3} \right )^N \le {{N+k-1} \choose k-1} \cdot r^N,\]
and the theorem follows by taking $N$-th roots and letting $N$ go to
infinity. Note that for each $N$, by an averaging argument, there must
be a particular distribution $\mu$ for which the left hand side of
\eqref{eq:asi-for-given-distribution} is at least $\left
(\sum_i(\ell_im_in_i)^{\omega_s/3} \right )^N/{{N+k-1} \choose k-1}$
and this is a concrete sequence of coherent configurations that prove
the same bound in the limit.
\end{proof}

The results of this section are not specific to coherent
configurations.  Given any algebra $A$, the analogous construction is
to look at the subalgebra of $A^{\otimes n}$ invariant under the action
of $S_n$.

Before using Theorem \ref{theorem:asi} to obtain bounds on
$\omega$, we briefly contrast other proofs of the asymptotic
sum inequality with the above proof, which seems structurally
different as we now explain. The standard proof of the
asymptotic sum inequality takes a tensor $T$ realizing
$\oplus_{i=1}^k \langle \ell_i,m_i,n_i \rangle$ and finds $k!$
independent copies of $\langle \prod_i \ell_i, \prod_i m_i,
\prod_i n_i \rangle$ in $T^{\otimes k}$. By performing block
matrix multiplication, these are capable of realizing the {\em
larger} matrix multiplication instance $\langle K\cdot\prod_i
\ell_i, K\cdot\prod_i m_i, K\cdot\prod_i n_i \rangle$ (where $K
\approx k!^{1/\omega}$), and the general bound follows after
some manipulations analogous to our Corollary
\ref{cor:asi-geometric-mean-version} and Theorem
\ref{theorem:asi}. In contrast, our proof finds a {\em single}
copy of $\langle \prod_i \ell_i, \prod_i m_i \prod_i n_i
\rangle$ in $T^{\otimes k}$, and then uses the fact that $T$
has special structure---it is the structural tensor of an
algebra---to argue that the same matrix multiplication instance
survives after symmetrizing the $k$\nobreakdash-th power. Symmetrizing
reduces the rank, and thus the s-rank actually {\em shrinks}
enough to obtain the same bound. We know of no other proof that
works by shrinking the rank (including the proof in~\cite{CKSU}
for independent matrix multiplications realized in group
algebras).

\subsection{Nontrivial bounds on $\omega$}

Using Theorem~\ref{theorem:asi}, we can convert all of the
results of \cite{CKSU} into realizations of a single matrix
multiplication tensor in a \emph{commutative} coherent
configuration, namely a symmetric power of an abelian group.
While the starting constructions are not new, the final
algorithms are (they use machinery introduced in this paper).
They establish that commutative coherent configurations suffice
to prove nontrivial bounds on $\omega$, and even point to a
specific family of commutative coherent configurations that we
conjecture is capable of proving $\omega = 2$.

\begin{theorem}
\label{thm:cksu-bounds-on-omega} There exist commutative
coherent configurations that prove s-rank exponent bounds
$\omega_s \le 2.48$, $\omega_s \le 2.41$, and $\omega_s \le
2.376$, and thus corresponding exponent bounds $\omega \le
2.72$, $\omega \le 2.62$, and $\omega \le 2.564$, respectively.
\end{theorem}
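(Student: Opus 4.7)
The plan is to combine three ingredients already in the paper with the existing abelian-group constructions from \cite{CKSU}. All of the bounds on $\omega$ in \cite{CKSU} come from abelian groups $G$ whose group algebras $\C[G]$ realize a direct sum $\bigoplus_{i=1}^k \langle \ell_i, m_i, n_i \rangle$ via the simultaneous triple product property. Each such $G$ gives rise to an association scheme (the group regarded as a coherent configuration under the left regular action), and since $G$ is abelian this association scheme is commutative and its adjacency algebra is $\C[G]$ itself. Thus each CKSU construction is, essentially for free, a commutative coherent configuration $\mathcal{C}$ of rank $|G|$ realizing $\bigoplus_{i=1}^k \langle \ell_i, m_i, n_i \rangle$ in the sense of Definition~\ref{definition:simultaneous}.

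Next I would feed each of these into Theorem~\ref{theorem:asi}. Commutativity is preserved by direct products and by fusion under the $S_k$-action (the symmetric power construction), so the resulting coherent configuration witnessing the bound $\sum_i (\ell_i m_i n_i)^{\omega_s/3} \le r$ is still commutative. The three specific constructions in \cite{CKSU} give, respectively, the arithmetic-sum inequalities that in \cite{CKSU} yielded $\omega \le 2.48$, $\omega \le 2.41$, and $\omega \le 2.376$ via the classical asymptotic sum inequality. Running the very same numerical computations with Theorem~\ref{theorem:asi} in place of \eqref{eqn:asi} yields, verbatim, the s-rank bounds $\omega_s \le 2.48$, $\omega_s \le 2.41$, and $\omega_s \le 2.376$, but now realized by a single commutative coherent configuration (a symmetric power of a direct power of the relevant abelian group) rather than by several independent embeddings.

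Finally, to convert these s-rank bounds into ordinary exponent bounds, I apply Theorem~\ref{thm:remove-weights}: $\omega \le (3\omega_s - 2)/2$. The arithmetic is routine and matches the claimed bounds: $(3 \cdot 2.48 - 2)/2 = 2.72$, $(3 \cdot 2.41 - 2)/2 = 2.615 \le 2.62$, and $(3 \cdot 2.376 - 2)/2 = 2.564$.

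There is no substantive obstacle, since all the hard combinatorial work (the triple product property constructions) was done in \cite{CKSU} and all the algebraic machinery (realizations in adjacency algebras, symmetric powers, the s-rank/rank relation) has been established earlier in this paper. The only subtle point worth stating carefully is why the CKSU constructions qualify as embeddings into a \emph{commutative} coherent configuration in the sense of Subsection~\ref{subsec:embedadj}: namely, that the realization of a direct sum of matrix multiplications by subsets satisfying the simultaneous triple product property in $G$ is the same data as a realization in the Schurian association scheme associated to $G$ acting on itself, which is commutative precisely because $G$ is abelian. Once that identification is made, Theorem~\ref{theorem:asi} and Theorem~\ref{thm:remove-weights} do all the remaining work.
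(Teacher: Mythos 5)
Your proposal is correct and follows essentially the same route as the paper's own proof: view the abelian-group constructions from \cite{CKSU} (satisfying the simultaneous triple product property) as commutative coherent configurations, apply Theorem~\ref{theorem:asi} to collapse to a single realization in a symmetric power while preserving commutativity, and then apply Theorem~\ref{thm:remove-weights} to translate the resulting s-rank exponent bounds into the stated bounds on $\omega$. The only difference is that you spell out more explicitly the identification of a group-algebra realization with a coherent-configuration realization and the commutativity-preservation argument, which the paper states more tersely.
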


\begin{proof}
Apply Theorem \ref{theorem:asi} to the abelian
group constructions of Proposition~3.8 in \cite{CKSU},
Theorems~3.3 and~6.6 in \cite{CKSU}, and the generalization
matching \cite{CW} (as stated in \cite{CKSU} but not described
in detail), respectively. Each of these constructions from \cite{CKSU},
when viewing groups as coherent configurations and adopting the language
of this paper, gives coherent configurations satisfying
Definition~\ref{definition:simultaneous}. Apply Theorem
\ref{thm:remove-weights} to the resulting s-rank exponent
bounds to obtain the claimed bounds on $\omega$.
\end{proof}

The specific exponent bounds cited above of course all suffer
from the 50\% penalty introduced by Theorem
\ref{thm:remove-weights}. But the numbers themselves should not
obscure the main point, which is that matrix multiplication via
coherent configurations is a viable approach to proving $\omega
= 2$. Indeed, we conjecture that commutative coherent
configurations are sufficient to prove $\omega=2$:

\begin{conjecture}
\label{conj:new-conjecture} There exist commutative coherent
configurations ${\cal C}_n$ realizing $\langle n,n,n \rangle$
and of rank $n^{2 + o(1)}$.
\end{conjecture}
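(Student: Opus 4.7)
The plan is to derive Conjecture~\ref{conj:new-conjecture} from one of the two main conjectures of~\cite{CKSU}, as indicated in the discussion preceding the conjecture statement. Each such conjecture posits the existence of abelian-group constructions that simultaneously realize independent matrix multiplications $\bigoplus_{i=1}^k \langle \ell_i, m_i, n_i\rangle$ with $\sum_i (\ell_i m_i n_i)^{2/3}$ matching the group order $r$ up to lower-order factors. Since an abelian group yields a commutative coherent configuration (one class per group element), these constructions supply commutative configurations of the kind needed to feed into our machinery.

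Next, I would apply Theorem~\ref{theorem:asi} to these configurations. Its proof is constructive: $\Sym^k \mathcal{C}^t$ is an explicit commutative coherent configuration that realizes a single matrix multiplication tensor of dimensions $\big(\prod_i \ell_i\big)^{tk^{t-1}}, \big(\prod_i m_i\big)^{tk^{t-1}}, \big(\prod_i n_i\big)^{tk^{t-1}}$ and of rank $\binom{r^t + k^t - 1}{k^t}$. Choosing $t$ large and optimizing parameters, the CKSU conjecture forces $\omega_s = 2$, so the rank of the single realized $\langle n,n,n\rangle$ is $n^{2+o(1)}$. Commutativity is preserved throughout: direct products of commutative coherent configurations are commutative, and the fusion under the $S_k$-action preserves the symmetry $p^k_{i,j} = p^k_{j,i}$ of intersection numbers.

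To obtain $\mathcal{C}_n$ for \emph{every} $n$ rather than just a subsequence, I would use a standard padding argument: to realize $\langle n,n,n\rangle$, take the next member $\mathcal{C}_N$ of the explicit sequence with $N \ge n$ and $N = n^{1+o(1)}$, and compose the realization with the obvious injections $[n]\times[n]\hookrightarrow [N]\times[N]$ in Definition~\ref{definition:realize}. The resulting configurations inherit commutativity and have rank $N^{2+o(1)} = n^{2+o(1)}$.

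The main obstacle is that the CKSU conjectures themselves remain open, so this strategy only delivers a \emph{conditional} proof, and an unconditional proof appears well out of reach with current techniques. Unlike the group case, there is no known pigeonhole obstruction to a commutative adjacency algebra directly realizing $\langle n,n,n\rangle$ at near-optimal rank, so one could alternatively hope to exhibit examples not arising from abelian groups---commutative association schemes from Gelfand pairs, distance-regular graphs, or entirely non-Schurian constructions. The key difficulty is that we currently have essentially no systematic method for producing commutative coherent configurations with the delicate triangle structure demanded by Definition~\ref{definition:realize}; the only known mechanism routes through CKSU-style simultaneous realizations followed by the symmetric power construction of Theorem~\ref{theorem:asi}, which is precisely what ties the conjecture to~\cite{CKSU}.
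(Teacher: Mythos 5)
Your proposal is correct and matches the paper's own treatment: the paper does not (and cannot) prove this conjecture unconditionally, but rather observes, just as you do, that it follows from either of the two conjectures of \cite{CKSU} by feeding the resulting abelian-group simultaneous realizations through the symmetric-power construction of Theorem~\ref{theorem:asi}, which preserves commutativity. Your added remarks on the padding to cover all $n$ and the absence of a pigeonhole obstruction in the commutative adjacency-algebra setting are also consistent with the paper's discussion.
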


Such a family of commutative coherent configurations would prove
$\omega = 2$. If Conjecture~3.4 or~4.7 from \cite{CKSU} hold, then
Conjecture~\ref{conj:new-conjecture} holds via
Theorem~\ref{theorem:asi}.
We note that recent work of Alon, Shpilka, and Umans
\cite{ASU} shows that Conjecture~3.4 from \cite{CKSU} contradicts a
sunflower conjecture, although there is no strong consensus that this
particular sunflower conjecture is true. Among the various
combinatorial/algebraic conjectures implying $\omega = 2$, Conjecture
\ref{conj:new-conjecture} is the weakest (it is implied by the others),
which makes it the ``easiest'' among these potential routes to proving
$\omega =2$.

\section{Families of coherent configurations}

In this section we discuss the suitability of broad classes of coherent
configurations for proving bounds on $\omega$.

\subsection{Coherent configurations with many fibers}

By property~(1) in the definition of a coherent configuration, there is
a subset of the classes that form a partition of the diagonal, and we
call these classes the \emph{fibers} of the coherent configuration. We
noted in Section \ref{subsec:coherent} that coherent configurations
with more than one fiber are noncommutative. More interestingly for our
application, we will see shortly that $n$ fibers suffice to embed $n
\times n$ matrix multiplication.  This observation generalizes
Example~\ref{example:trivial}.

The fibers of a coherent configuration $\mathcal{C}$ correspond to a
partition of the points into subsets
$\mathcal{C}_1,\dots,\mathcal{C}_n$.  Then it follows from property~(3)
in the definition that the classes of $\mathcal{C}$ form a refinement
of the subsets $\mathcal{C}_i \times \mathcal{C}_j$.

\begin{proposition}
Every coherent configuration with $n$ fibers realizes $\langle n,n,n
\rangle$.
\end{proposition}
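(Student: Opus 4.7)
The plan is to build the three injective maps $\alpha,\beta,\gamma$ directly from the fibers. First, the key structural fact we need is that every class of $\mathcal{C}$ is contained in $\mathcal{C}_i \times \mathcal{C}_j$ for some pair of fibers; this is a standard consequence of property~(3) in the definition, applied with one of the two classes chosen to be a fiber $F$. Indeed, for any class $R_k$ and any $(x,y) \in R_k$, the intersection number $p^k_{F,k}$ counts those $z$ with $(x,z) \in F$ and $(z,y) \in R_k$, which equals $1$ if $x \in \mathcal{C}_F$ and $0$ otherwise; since this number depends only on $k$, either all first coordinates of $R_k$ lie in $\mathcal{C}_F$ or none do, and the analogous statement holds for second coordinates.

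Given this, I would fix a representative point $x_a \in \mathcal{C}_a$ for each fiber $a \in [n]$, and for each ordered pair $(a,b) \in [n]^2$ let $R_{a,b}$ denote the unique class containing the pair $(x_a,x_b)$. By the fiber containment, $R_{a,b} \subseteq \mathcal{C}_a \times \mathcal{C}_b$. Now set
\[
\alpha(a,b') = R_{a,b'}, \quad \beta(b,c') = R_{b,c'}, \quad \gamma(c,a') = R_{c,a'},
\]
viewing classes as elements of $[r]$ via the fixed numbering of $\mathcal{C}$.

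Injectivity of $\alpha$ (and likewise $\beta$ and $\gamma$) is automatic: the containment $R_{a,b'} \subseteq \mathcal{C}_a \times \mathcal{C}_{b'}$ together with disjointness of fibers recovers the pair $(a,b')$ from the class $R_{a,b'}$. For the triangle condition, one direction is witnessed directly by the chosen representatives: when $a=a'$, $b=b'$, $c=c'$, the triple $(x_a,x_b,x_c)$ gives $(x_a,x_b) \in R_{a,b}$, $(x_b,x_c) \in R_{b,c}$, and $(x_c,x_a) \in R_{c,a}$ by definition, so $\alpha(a,b), \beta(b,c), \gamma(c,a)$ form a triangle. Conversely, any triangle witness $y_1,y_2,y_3$ for $\alpha(a,b'), \beta(b,c'), \gamma(c,a')$ must satisfy $y_1 \in \mathcal{C}_a \cap \mathcal{C}_{a'}$, $y_2 \in \mathcal{C}_{b'} \cap \mathcal{C}_b$, and $y_3 \in \mathcal{C}_{c'} \cap \mathcal{C}_c$, which forces $a=a'$, $b=b'$, $c=c'$ since distinct fibers are disjoint.

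There is no real obstacle in this argument: the only nontrivial ingredient is the lemma that classes refine products of fibers, and everything else is bookkeeping. This also explains cleanly why Example~\ref{example:trivial} is the extreme case: when every product $\mathcal{C}_i \times \mathcal{C}_j$ is a single class, the construction above recovers precisely that example.
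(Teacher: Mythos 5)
Your proposal is correct and takes essentially the same approach as the paper: fix a system of distinct representatives $x_1,\dots,x_n$ for the fibers, define $\alpha(a,b)$ to be the class containing $(x_a,x_b)$ (likewise $\beta,\gamma$), and use the fact that classes refine the products $\mathcal{C}_i\times\mathcal{C}_j$ to verify injectivity and the triangle condition. The paper states the verification as easy and omits it; you have correctly supplied the details, including the preliminary observation (which the paper states just before the proposition) that every class is contained in some $\mathcal{C}_i\times\mathcal{C}_j$.
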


\begin{proof}
Let $\mathcal{C}$ be a coherent configuration with $n$ fibers and
corresponding partition $\mathcal{C}_1,\dots,\mathcal{C}_n$, and let
$x_1,\dots,x_n$ be a system of distinct representatives for
$\mathcal{C}_1,\dots,\mathcal{C}_n$. Define $\alpha(a, b)$ to be the
class of ${\cal C}$ containing $(x_a, x_{b})$,  $\beta(b, c)$ to be the
class containing $(x_b, x_{c})$ and $\gamma(c, a)$ to be the class
containing $(x_c, x_{a})$. It is easy to verify that these functions
satisfy Definition~\ref{definition:realize}.
\end{proof}

However, this generic embedding does not lead to nontrivial bounds on
$\omega_s$, because a similar argument shows that one of the
character degrees must be at least $n$.  Let $\mathcal{C}'$ be the
coherent configuration with the same points as $\mathcal{C}$ and
$\mathcal{C}_i \times \mathcal{C}_j$ as its classes.  Then the
adjacency algebra of $\mathcal{C}'$ is a subalgebra of that of
$\mathcal{C}$ (the adjacency matrix for $\mathcal{C}_i \times
\mathcal{C}_j$ is the sum of the adjacency matrices for the classes of
$\mathcal{C}$ contained in $\mathcal{C}_i \times \mathcal{C}_j$), and
it is not hard to check that the adjacency algebra of $\mathcal{C}'$ is
isomorphic to $\C^{n \times n}$.  However, $\C^{n \times n}$ cannot be
isomorphic to a subalgebra of a semisimple algebra $\C^{d_1 \times d_1}
\times \dots \times \C^{d_k \times d_k}$ unless $d_i \ge n$ for some
$i$.  To see why, note that projection onto the factors in the
semisimple algebra would yield representations of dimension $d_i$ for
$\C^{n \times n}$. Because $\C^{n \times n}$ is a simple algebra, these
projections must vanish unless $d_i \ge n$.

\subsection{Schurian coherent configurations}

Recall that the \emph{Schurian} coherent configurations are those
obtained from actions of groups on sets, and that such a coherent
configuration is an association scheme iff the action is transitive.

In the special case of a finite group acting on itself by right
multiplication, our framework is equivalent to the triple product
property from \cite{CU}.  Thus, the conjectures of \cite{CKSU}, which
all imply $\omega = 2$ via the triple product property in groups, imply
that Schurian coherent configurations are sufficient to achieve
$\omega_s = 2$.

More interestingly, observe that the coherent configurations arising in
Theorem~\ref{thm:cksu-bounds-on-omega} and
Conjecture~\ref{conj:new-conjecture} are in fact commutative Schurian
coherent configurations. This is because symmetric powers of coherent
configurations arising from abelian groups, which are commutative, are
Schurian via a wreath product action: if $G$ is a group and ${\cal C}$
is the associated coherent configuration, then $\Sym^k{\cal C}$ is the
Schurian coherent configuration arising from $S_k \ltimes G^k$ acting
on $G^k$.

Thus commutative Schurian coherent configurations, which arise from
transitive group actions, already prove nontrivial bounds on $\omega_s$
(and $\omega$), and if either of the two conjectures in \cite{CKSU} is
true, then they suffice to prove $\omega_s = 2$.

\subsection{Group association schemes}

Another generic way to obtain a Schurian coherent configuration is to
consider $G \times G$ acting on $G$ via $(x,y)\cdot g = xgy^{-1}$. This
gives rise to a commutative coherent configuration (regardless of
whether $G$ is commutative or not, which is attractive for our
application) called the {\em group association scheme}, whose classes
are identified with conjugacy classes of $G$ (i.e., class $R_i =
\{(g,h) : gh^{-1} \in C_i\}$, where $C_i$ is the $i$-th conjugacy
class). Here we show that group association schemes suffice to prove
nontrivial bounds on $\omega_s$, and that if either of the two
conjectures in \cite{CKSU} is true, then group association schemes
suffice to prove $\omega_s = 2$.

We need the following definition from \cite{CKSU} (Definition~5.1):

\begin{definition}
We say that $n$ triples of subsets $A_i, B_i, C_i$ of a group $H$
satisfy the \emph{simultaneous triple product property} if
\begin{gather*}
a_i^{-1}a_j'b_j^{-1}b_k'c_k^{-1}c_i' = 1\\
\Leftrightarrow\\
i = j = k \mbox{ and } a_i = a_j',\ b_j = b_k',\ c_k =
c_i'
\end{gather*}
holds for all $i,j,k$ and $a_i \in A_i$, $a_j' \in A_j$, $b_j \in B_j$,
$b_k' \in B_k$, $c_k \in C_k$, $c_i' \in C_i$.
\end{definition}

When this holds, the coherent configuration associated with the right
action of $H$ on itself realizes $\oplus_i \langle |A_i|, |B_i|, |C_i|
\rangle$ via functions $\alpha_i$, $\beta_i,$ and $\gamma_i$ defined on $A_i\times B_i$,
$B_i \times C_i$, and $C_i \times A_i$, respectively.  Specifically,
$\alpha_i(a,b)$ is the class containing the pair $(a,b)$, etc.  Then
Definition~\ref{definition:simultaneous} amounts to the simultaneous
triple product property.

The paper \cite{CKSU} describes the following constructions,
among others:

\begin{enumerate}
\item[(1)] It follows from Theorem~3.3 and Section~6.3 in \cite{CKSU}
    that for all $m > 2$, and $\ell$ sufficiently large, there are $n$
    triples $A_i,B_i,C_i$ of subsets of $(\Z/m\Z)^{3\ell}$
    satisfying the simultaneous triple product property with $n =
    (27/4)^{\ell - o(\ell)}$ and $|A_i||B_i||C_i| = (m-2)^{3\ell}$
    for all $i$.  Applying Theorem~\ref{theorem:asi} and
    taking the limit as $\ell \to \infty$ yields
    \[
    \omega_s \le \frac{3 \log m - \log (27/4)}{\log (m-2)},
    \]
    which is optimized by $m=10$ (giving $\omega_s \le 2.41$).

\item[(2)] Either of the two conjectures in \cite{CKSU} implies the
    existence of subsets satisfying the simultaneous triple
    product property in an abelian group $H$ with
    \[
    |A_i| = |B_i| = |C_i| = t \ge n^\varepsilon
    \]
    for $1 \le i \le n$ and $|H| = (t^2 n)^{1 +
    o(1)}$ (as $n \to \infty$ with $\varepsilon>0$ fixed),
    which would prove $\omega=2$.
\end{enumerate}

\begin{theorem}
\label{thm:grp-as} Let $H$ be an abelian group, and suppose $n$ triples
of subsets $A_i, B_i, C_i$ in $H$ satisfy the simultaneous triple
product property. Let $G = S_n \ltimes H^n$, and define
\begin{gather*}
A = A_1 \times A_2 \times \cdots \times A_n\\
B = B_1 \times B_2 \times \cdots \times B_n\\
C = C_1 \times C_2 \times \cdots \times C_n,
\end{gather*}
viewed as subsets of $G$ via the natural embedding of $H^n$ in $G$. Let
$\mathcal{C}$ be the group association scheme of $G$. Then the subsets
$A, B, C$ satisfy the requirements of
Proposition~\ref{proposition:action} with respect to $\mathcal{C}$
(i.e., for the action of $G \times G$ on $G$), so $\mathcal{C}$
realizes $\langle |A|, |B|, |C| \rangle$.
\end{theorem}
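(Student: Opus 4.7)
The plan is to verify the hypothesis of Proposition~\ref{proposition:action} for the action of $G \times G$ on $G$ by $(x,y)\cdot g = xgy^{-1}$. Suppose $(F_1,F_2), (G_1,G_2), (H_1,H_2) \in G\times G$ and $a\in A$, $b\in B$, $c\in C$ satisfy $F_1aF_2^{-1}\in A$, $G_1bG_2^{-1}\in B$, $H_1cH_2^{-1}\in C$, and $F_1G_1H_1 = F_2G_2H_2 = 1$. Because $A$, $B$, $C\subseteq H^n = \ker\pi$ where $\pi\colon G\to S_n$ is the projection, the three membership conditions force $\pi(F_1)=\pi(F_2)=:\sigma$, $\pi(G_1)=\pi(G_2)=:\tau$, $\pi(H_1)=\pi(H_2)=:\rho$, and $F_1G_1H_1=1$ gives $\sigma\tau\rho=1$. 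Write $F_1=(\sigma,\mathbf x)$, $F_2=(\sigma,\mathbf x')$, and analogously for $G_1,G_2,H_1,H_2$ in semidirect product coordinates. A direct computation yields $F_1aF_2^{-1} = (1,\sigma(\mathbf x-\mathbf x'+\mathbf a))$, so $F_1aF_2^{-1}\in A$ is equivalent to the coordinate-wise statement that $a'_k := x_{\sigma^{-1}(k)}-x'_{\sigma^{-1}(k)}+a_{\sigma^{-1}(k)}$ lies in $A_k$ for every $k$; analogously extract $b'_k\in B_k$ and $c'_k\in C_k$.

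Set $\mathbf m = \mathbf x' - \mathbf x$, $\mathbf n = \mathbf y' - \mathbf y$, $\mathbf p = \mathbf z' - \mathbf z$, so that $F_2 = F_1\mathbf m$, $G_2 = G_1\mathbf n$, $H_2=H_1\mathbf p$. Pushing $\mathbf m$ and $\mathbf n$ rightward through $G_1$ and $H_1$ using the commutation rule $\mathbf h \cdot g = g \cdot \pi(g)^{-1}(\mathbf h)$, combining $F_1G_1H_1=1$ with the identity $(\tau\rho)^{-1}=\sigma$, and exploiting commutativity of $H^n$, the relation $F_2G_2H_2=1$ reduces to the single additive identity $\sigma(\mathbf m)+\rho^{-1}(\mathbf n)+\mathbf p = 0$ in $H^n$. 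Componentwise at index $i$ this reads $m_{\sigma^{-1}(i)}+n_{\rho(i)}+p_i=0$, and substituting $m_{\sigma^{-1}(i)}=a_{\sigma^{-1}(i)}-a'_i$, $n_{\rho(i)}=b_{\rho(i)}-b'_{\sigma^{-1}(i)}$ (using $\tau^{-1}\sigma^{-1}=\rho$, which follows from $\sigma\tau\rho=1$), and $p_i=c_i-c'_{\rho(i)}$ yields
\[
-a'_i+a_{\sigma^{-1}(i)}-b'_{\sigma^{-1}(i)}+b_{\rho(i)}-c'_{\rho(i)}+c_i = 0,
\]
with the six terms lying in $A_i, A_{\sigma^{-1}(i)}, B_{\sigma^{-1}(i)}, B_{\rho(i)}, C_{\rho(i)}, C_i$ respectively.

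This is exactly an instance of the simultaneous triple product property equation $-a_I+a'_J-b_J+b'_K-c_K+c'_I=0$ with $I=i$, $J=\sigma^{-1}(i)$, $K=\rho(i)$, once STPP's unprimed roles $(a_I,b_J,c_K)$ are assigned to our $(a'_i,b'_{\sigma^{-1}(i)},c'_{\rho(i)})$ and STPP's primed roles $(a'_J,b'_K,c'_I)$ to our $(a_{\sigma^{-1}(i)},b_{\rho(i)},c_i)$; this assignment is forced by matching signs and set memberships. STPP then forces $I=J=K$ for every $i$, giving $\sigma=\rho=1$ and hence $\tau=1$, along with $a'_i=a_i$, $b'_i=b_i$, $c'_i=c_i$. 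With $\sigma=1$, the equality $a'_i=a_i$ simplifies to $x_i=x'_i$, so $F_1=F_2$, and similarly $G_1=G_2$, $H_1=H_2$; then $F_1aF_2^{-1}=F_1aF_1^{-1}=a$ by commutativity of $H^n$, and the analogous conclusions for $b,c$ follow identically, verifying the hypothesis of Proposition~\ref{proposition:action}. The main obstacle is careful bookkeeping---pushing displacements across the semidirect product, and recognizing that ``our primes'' play opposite roles to ``STPP's primes'' in the final matching; once this swap is identified, STPP closes the argument in one step.
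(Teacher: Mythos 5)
Your proof is correct and follows essentially the same route as the paper's: write the three pairs in semidirect-product coordinates, observe that the membership conditions force equal permutation parts, reduce the two equalities $F_1G_1H_1=1$ and $F_2G_2H_2=1$ to a single additive identity in $H^n$ relating the displacements, rewrite it coordinate-wise in terms of $a,a',b,b',c,c'$, and close with the simultaneous triple product property. The only differences are superficial: you use additive notation with the permutation written on the left (the paper uses multiplicative notation with the permutation on the right), and you relabel the permutations, so your $\rho$ is the paper's $\tau=(\pi\rho)^{-1}$.
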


\begin{proof}
We will write elements of $G$ as $h\pi$, with $h \in H^n, \pi \in S_n$,
and we will use $\pi \cdot h$ to denote the permutation action of $S_n$
on $H^n$.  The semidirect product satisfies $\pi h = (\pi \cdot h) \pi$
for $\pi \in S_n$ and $h \in H^n$.

Suppose we have $f = (f_1, f_2)$, $g = (g_1, g_2)$, $h = (h_1, h_2)$ in
$G \times G$ and $a, a' \in A$, $b,b' \in B$, $c,c'
\in C$ for which
\begin{equation} \label{eq:simul1}
\begin{aligned}
fgh &= 1\\
f_1af_2^{-1} & =   a'\\
g_1bg_2^{-1} & =   b'\\
h_1ch_2^{-1} & =   c'.
\end{aligned}
\end{equation}
We wish to conclude that $a = a', b = b', c = c'$. From the latter three
equations in \eqref{eq:simul1}, we see that $f_1 = x_1\pi$, $f_2 = x_2\pi$ for some
$x_1, x_2 \in H^n$ and $\pi \in S_n$.  Similarly, $g_1 = y_1\rho$, $g_2
= y_2\rho$ and $h_1 = z_1\tau$, $h_2 = z_2\tau$. Now, using the
commutativity of $H$, the three equations become
\begin{equation} \label{eq:simul2}
\begin{aligned}
x_1x_2^{-1}& =   a'(\pi \cdot a^{-1}) \\
y_1y_2^{-1} & =   b'(\rho \cdot b^{-1}) \\
z_1z_2^{-1} & =  c'(\tau \cdot c^{-1}).
\end{aligned}
\end{equation}
From $fgh=1$ we have $f_1g_1h_1 = 1$, which implies
\[
1 = f_1g_1h_1 = x_1(\pi\cdot y_1)((\pi\rho) \cdot z_1).
\]
Similarly, from $fgh=1$ we have $f_2g_2h_2 = 1$ and hence
\[
1 = f_2g_2h_2 = x_2(\pi\cdot y_2)((\pi\rho) \cdot z_2).
\]
Using the commutativity of $H$, we obtain from these two equations
\[
x_1x_2^{-1}(\pi\cdot (y_1y_2^{-1}))((\pi\rho) \cdot (z_1z_2^{-1})) = 1,
\]
which combined with \eqref{eq:simul2} yields
\[
a'(\pi \cdot a^{-1})(\pi \cdot b')((\pi\rho) \cdot b^{-1})((\pi\rho) \cdot c')((\pi\rho\tau) \cdot c^{-1}) = 1.
\]
Now, $fgh=1$ implies $\pi\rho\tau = 1$, so we obtain
\[
(\pi \cdot (a^{-1}b'))((\pi\rho) \cdot (b^{-1}c'))(c^{-1}a') = 1.
\]
This implies via the simultaneous triple product property that $\pi =
\pi\rho = 1$. We conclude that $\pi = \rho = \tau = 1$, and then that
$a = a'$, $b = b'$, $c = c'$ as desired.
\end{proof}

To determine what bounds on $\omega_s$ can be expected, we need to know
the rank of this group association scheme, i.e., the number of conjugacy
classes in $S_n \ltimes H^n$:

\begin{lemma} \label{lemma:countconjugacy}
There is a constant $C$ such that for every abelian group $H$,
if $n \le |H|$ then the number of conjugacy classes of $S_n
\ltimes H^n$ is at most $C^n|H|^n/n^n$.
\end{lemma}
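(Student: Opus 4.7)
The plan is to describe the conjugacy classes of $G = S_n \ltimes H^n = H \wr S_n$ combinatorially and then count them carefully, exploiting the hypothesis $n \le |H|$.

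First I would set up the parametrization of conjugacy classes. Because $H$ is abelian, a standard wreath-product calculation identifies conjugacy classes of $H \wr S_n$ with $H$-colored partitions of $n$: a class is determined by the cycle type of $\pi \in S_n$ together with, for each cycle $(i_1,\dots,i_k)$, the ``cycle product'' $v_{i_1} v_{i_2} \cdots v_{i_k} \in H$. Commutativity is what makes this product well-defined (independent of the cyclic starting point) and invariant under conjugation by elements of $H^n$. Equivalently, a class corresponds to a multiset of pairs $(k,h) \in \mathbb{Z}_{\ge 1}\times H$ whose $k$'s sum to $n$, giving
\[
|\mathrm{Conj}(G)|
= \sum_{\lambda \vdash n}\prod_k \binom{|H|+a_k-1}{a_k}
= [q^n] \prod_{k \ge 1}(1-q^k)^{-|H|},
\]
where $a_k = a_k(\lambda)$ counts the parts of $\lambda$ of size $k$.

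Next I would use $a_k \le n \le |H|$ to bound $\binom{|H|+a_k-1}{a_k} \le (2|H|)^{a_k}/a_k!$ (each of the $a_k$ factors in the numerator is at most $2|H|$). This upgrades coefficient-wise to $(1-q^k)^{-|H|}\le \exp(2|H|q^k)$ in degrees at most $n$, yielding
\[
|\mathrm{Conj}(G)| \le [q^n] \exp\!\big(2|H|\, q/(1-q)\big).
\]
Finally, I would apply Cauchy's coefficient bound $[q^n]f(q) \le r^{-n}f(r)$ at $r = n/(n+2|H|)$. This choice makes $2|H|r/(1-r)=n$, so the exponential evaluates to $e^n$, while $r^{-n}=(1+2|H|/n)^n \le (3|H|/n)^n$ using $n \le |H|$. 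Hence $|\mathrm{Conj}(G)| \le (3e|H|/n)^n$, and $C = 3e$ suffices.

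The only part that really needs care is the parametrization of conjugacy classes of $H \wr S_n$ for $H$ abelian --- classical, but worth citing or briefly deriving. After that, the analysis is routine generating-function estimation. The hypothesis $n \le |H|$ is essential both in the binomial bound of the second step (which degrades once some $a_k$ exceeds $|H|$) and in the final inequality $r^{-n} \le (3|H|/n)^n$; without it, one can still bound $|\mathrm{Conj}(G)|$ by something like $\exp(O(\sqrt{|H|n}))$, but this is not of the required form $C^n |H|^n/n^n$ for a universal $C$.
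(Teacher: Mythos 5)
Your proof is correct, and it takes a genuinely different route from the paper's. Both arguments start from the same classical parametrization of conjugacy classes of $S_n\ltimes H^n$ by $H$-colored partitions of $n$, and both must bound $\sum_{\lambda\vdash n}\prod_k\binom{|H|+a_k-1}{a_k}$. The paper does this ``term by term'': it bounds the number of cycle types by $2^{n-1}$, bounds each factor via $\binom{|H|+c-1}{c}\le\left(e(|H|+c-1)/c\right)^c$, reorganizes, and is left with maximizing the entropy-like quantity $-\sum_i x_i\log x_i$ subject to $\sum_i ix_i=1$, which it handles with Lagrange multipliers (taking some care at the boundary). You instead package the whole sum as $[q^n]\prod_{k\ge1}(1-q^k)^{-|H|}$, use the hypothesis $n\le|H|$ once to replace each Euler factor by $\exp(2|H|q^k)$ coefficient-wise in the relevant degrees, and then apply the trivial Cauchy/saddle-point bound at $r=n/(n+2|H|)$. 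Your route is cleaner: it avoids the constrained optimization entirely, the hypothesis $n\le|H|$ enters transparently in exactly two places, and it yields the sharper constant $C=3e\approx8.15$ versus the paper's $C=4e^3\approx80.3$. The paper's argument is arguably more self-contained in that it never invokes generating functions or a coefficient-extraction inequality, but at the cost of a more delicate calculation. Both are valid, and since the lemma is only used as a crude upper bound, the difference in $C$ is immaterial for the application.
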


This is a crude bound, but it will suffice for our purposes.

\begin{proof}
It is not difficult to prove the following description of
the conjugacy classes in the group
$S_n \ltimes H^n$.  Each
element of this group can be written as $h\pi$ with $h \in H^n$ and
$\pi \in S_n$. The cycle type of $\pi$ is preserved under conjugation,
and the sum of the elements of $H$ in the coordinates of $h$
corresponding to each cycle of $\pi$ is also preserved.  Furthermore,
these invariants completely specify the conjugacy class.  Thus, each conjugacy
class is specified by a multiset of pairs consisting of a cycle
length and an element of $H$, where the cycle lengths must sum to $n$.

The possible cycle types correspond to partitions of $n$, and the number of
them grows subexponentially as $n \to \infty$.  More elementarily,
there are $2^{n-1}$ compositions of $n$ (ways of writing $n$ as an
ordered sum of positive integers), and therefore at most $2^{n-1}$
partitions of $n$.

Suppose the permutation has $c_i$ cycles of length $i$, with
$\sum_{i=1}^n i c_i = n$.  Then there are
\[
\prod_{i=1}^n \binom{|H|+c_i-1}{c_i}
\]
ways to choose the elements of $H$ corresponding to these cycles. Thus,
bounding the number of conjugacy classes in $G$ amounts to bounding how
large this product can be.

We have
\begin{align*}
\prod_{i=1}^n \binom{|H|+c_i-1}{c_i} &\le \prod_{i=1}^n \left(\frac{e(|H|+c_i-1)}{c_i}\right)^{c_i}\\
& \le (2e)^n \prod_{i=1}^n \frac{|H|^{c_i}}{c_i^{c_i}}\\
& \le (2e)^n \prod_{i=1}^n \frac{|H|^{ic_i}}{c_i^{c_i}n^{(i-1)c_i}}\\
& \le (2e)^n \frac{|H|^n}{n^n} \prod_{i=1}^n \left(\frac{n}{c_i}\right)^{c_i}.
\end{align*}
If we set $x_i = c_i/n$, then
\[
\prod_{i=1}^n \left(\frac{n}{c_i}\right)^{c_i} = e^{-n \sum_{i=1}^n x_i \log x_i},
\]
where $\log$ denotes the natural logarithm.
Thus, to complete the proof we must show that $-\sum_{i=1}^n
x_i \log x_i$ is bounded independently of $n$, whenever $x_i
\ge 0$ and $\sum_{i=1}^n i x_i = 1$.  The maximum can be found
using Lagrange multipliers.  One must deal with the
boundary cases when $x_i=0$ for some $i$, and we provide the
details below.

Suppose $x_1,\dots,x_n$ maximize $-\sum_{i=1}^n
x_i \log x_i$ subject to $\sum_{i=1}^n i x_i = 1$ and $x_i \ge 0$.
The desired result is trivial when only one of $x_1,\dots,x_n$
is nonzero. Otherwise, let $z_1,\dots,z_m$ be the nonzero
elements among $x_1,\dots,x_n$.  The equation $\sum_{i=1}^n i x_i = 1$.
becomes $\sum_{i=1}^m y_iz_i=1$, where $y_1 < y_2
< \dots < y_m$ are positive integers.
Then there is a Lagrange multiplier $\lambda$ such that
$-1-\log z_i = \lambda y_i$ for all $i$, and hence
\[
-\sum_{i=1}^m z_i \log z_i = \sum_{i=1}^m z_i (\lambda y_i+1) = \lambda + \sum_{i=1}^m z_i \le \lambda+1.
\]
To bound $\lambda$, note that $z_i = e^{-1-\lambda y_i}$ and
hence
\[
\sum_{i=1}^m y_i e^{-1-\lambda y_i} = 1,
\]
while for $\lambda>1$ we have
\[
\sum_{i=1}^m y_i e^{-1-\lambda y_i} < \sum_{j=1}^\infty j e^{-1-j} < 1.
\]
Thus, $\lambda \le 1$ and so $-\sum_{i=1}^m z_i \log z_i \le 2$.  Combining the
estimates so far shows that we can take $C = 4e^3$ in the lemma
statement.  (The best possible constant is of course much
smaller.)
\end{proof}

This bound on the rank of these group association schemes is precisely
what is needed to recover the desired bounds from the simultaneous
triple product property constructions listed
earlier in this section.  E.g., applying Theorem~\ref{thm:grp-as} and
Lemma~\ref{lemma:countconjugacy} to the second example yields
\[
t^{n \omega_s} \le C^n \frac{(t^2 n)^{n(1 +
    o(1))}}{n^n},
\]
which is equivalent to
\[
\omega_s \log t \le \log C + (2+o(1))\log t + o(\log n).
\]
Because $t \ge n^\varepsilon$, we get $\omega_s=2$ in the limit as $n \to \infty$.

\begin{corollary}
There exist group association schemes that prove $\omega_s \le 2.41$
(and hence $\omega\le2.62$). If either of the conjectures in
\cite{CKSU} is true, then there are group association schemes that
prove $\omega_s=2$ (and hence $\omega=2$).
\end{corollary}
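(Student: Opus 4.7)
The plan is to feed each of constructions~(1) and~(2) recalled above into Theorem~\ref{thm:grp-as}, bound the rank of the resulting group association scheme via Lemma~\ref{lemma:countconjugacy}, extract an $\omega_s$ bound from Proposition~\ref{prop:s-rank-omega-bound}, and finally apply Theorem~\ref{thm:remove-weights} to convert these $\omega_s$ bounds into the claimed $\omega$ bounds.

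Given an abelian group $H$ and triples $A_i,B_i,C_i \subseteq H$ ($1\le i\le n$) satisfying the simultaneous triple product property, form $A = A_1 \times \cdots \times A_n$, $B = B_1 \times \cdots \times B_n$, and $C = C_1 \times \cdots \times C_n$ inside $H^n \subseteq G := S_n \ltimes H^n$, and let $\mathcal{C}$ be the group association scheme of $G$. Theorem~\ref{thm:grp-as} shows that $\mathcal{C}$ realizes $\langle |A|,|B|,|C|\rangle$; since $\mathcal{C}$ is commutative, its structural tensor has rank equal to the rank $r$ of $\mathcal{C}$, and Lemma~\ref{lemma:countconjugacy} supplies $r \le C^n|H|^n/n^n$ as soon as $n\le|H|$. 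Combining with Proposition~\ref{prop:s-rank-omega-bound} yields
\[
(|A||B||C|)^{\omega_s/3} \le r \le C^n|H|^n/n^n.
\]

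For construction~(2), with $|A_i|=|B_i|=|C_i|=t\ge n^\varepsilon$ and $|H|=(t^2n)^{1+o(1)}$, the rearrangement already sketched in the excerpt immediately preceding the corollary forces $\omega_s \to 2$ as $n\to\infty$. For construction~(1), taking $H = (\Z/m\Z)^{3\ell}$, $n = (27/4)^{\ell - o(\ell)}$, and $|A_i||B_i||C_i|=(m-2)^{3\ell}$, the analogous calculation---substitute into the displayed inequality, take logarithms, divide by $\ell n$, and let $\ell\to\infty$---yields $\omega_s \le (3\log m - \log(27/4))/\log(m-2)$, optimized at $m=10$ to give $\omega_s\le 2.41$. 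Theorem~\ref{thm:remove-weights} then produces $\omega \le (3\cdot 2.41 - 2)/2 < 2.62$ unconditionally, and $\omega=2$ conditionally on either \cite{CKSU} conjecture. There is no substantive obstacle beyond bookkeeping once the earlier machinery is in place; the only hypothesis to check is $n\le |H|$ in Lemma~\ref{lemma:countconjugacy}, which is immediate in both cases ($|H|$ grows exponentially in $\ell$ in construction~(1), while $|H| \ge n$ by hypothesis in construction~(2)).
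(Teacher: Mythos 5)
Your proposal is correct and mirrors the paper's own argument: feed the simultaneous-triple-product constructions of \cite{CKSU} into Theorem~\ref{thm:grp-as}, bound the number of conjugacy classes of $S_n\ltimes H^n$ via Lemma~\ref{lemma:countconjugacy}, combine with Proposition~\ref{prop:s-rank-omega-bound} to get the stated $\omega_s$ bounds, and finish with Theorem~\ref{thm:remove-weights}. The only difference is cosmetic---the paper spells out the arithmetic only for construction~(2) and leaves construction~(1) implicit, whereas you sketch both---so there is nothing substantive to add.
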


More generally, one can imitate the transition from
Corollary~\ref{cor:asi-geometric-mean-version} (which is analogous to Theorem~\ref{thm:grp-as})
to Theorem~\ref{theorem:asi} to give a proof of Theorem~5.5 from \cite{CKSU}
using group association schemes.

\end{document}